\newtheorem{theorem}{\bf Theorem}[section]
\newtheorem{lemma}{\bf Lemma}[section]
\newtheorem{corollary}{\bf Corollary}[section]
\newtheorem{definition}{\bf Definition}[section]
\newcommand{\longthmtitle}[1]{\mbox{}\textup{\textbf{(#1):}}}
\newcommand{\real}{{\mathbb{R}}}
\newcommand{\integersnonnegative}{\mathbb{Z}_{\geq 0}}
\newcommand{\integerspositive}{\mathbb{Z}_{> 0}}
\newcommand{\oprocendsymbol}{\hbox{$\bullet$}}
\newcommand{\oprocend}{\relax\ifmmode\else\unskip\hfill\fi\oprocendsymbol}
\newcommand{\defeq}{\vcentcolon=}
\newcommand{\bv}{{\mathscr{B}}}
\newcommand{\Tini}{{T_{\textup{ini}}}}
\newcommand{\uini}{{u_{\textup{ini}}}}
\newcommand{\yini}{{y_{\textup{ini}}}}
\newcommand{\xini}{{x_{\textup{ini}}}}
\newcommand{\Up}{{U_{\mathrm{p}}}}
\newcommand{\Uf}{{U_{\mathrm{f}}}}
\newcommand{\Yp}{{Y_{\mathrm{p}}}}
\newcommand{\Yf}{{Y_{\mathrm{f}}}}
\newcommand{\Uphat}{{\widehat{U}_{\mathrm{p}}}}
\newcommand{\Ufhat}{{\widehat{U}_{\mathrm{f}}}}
\newcommand{\Yphat}{{\widehat{Y}_{\mathrm{p}}}}
\newcommand{\Yfhat}{{\widehat{Y}_{\mathrm{f}}}}
\newcommand{\col}{{\textup{col}}}
\title{Data-Enabled Predictive Control: In the Shallows of the DeePC}
\author{Jeremy Coulson \qquad John Lygeros \qquad Florian D\"{o}rfler
\thanks{All authors are with the Department of Information Technology and Electrical Engineering at ETH Z\"{u}rich, Switzerland~\texttt{\{jcoulson, lygeros, dorfler\}@control.ee.ethz.ch}. This research was supported by ETH Z\"{u}rich funds and by the ERC under the project OCAL, grant number 797845.}}
\begin{document}

\maketitle
\thispagestyle{empty}
\pagestyle{empty}

\begin{abstract}

We consider the problem of optimal trajectory tracking for unknown systems. A novel data-enabled predictive control (DeePC) algorithm is presented that computes optimal and safe control policies using real-time feedback driving the unknown system along a desired trajectory while satisfying system constraints. Using a finite number of data samples from the unknown system, our proposed algorithm uses a behavioural systems theory approach to learn a non-parametric system model used to predict future trajectories. The DeePC algorithm is shown to be equivalent to the classical and widely adopted Model Predictive Control (MPC) algorithm in the case of deterministic linear time-invariant systems. In the case of nonlinear stochastic systems, we propose regularizations to the DeePC algorithm. Simulations are provided to illustrate performance and compare the algorithm with other methods.

\end{abstract}

\section{Introduction}
As systems are becoming more complex and data is becoming more readily available, scientists and practitioners are beginning to bypass classical model-based techniques in favour of data-driven methods~\cite{future-of-the-field:17,ZSH-ZW:13}. Data-driven methods are suitable for applications where first-principle models are not conceivable (e.g., in human-in-the-loop applications), when models are too complex for control design (e.g., in fluid dynamics), and when thorough modelling and parameter identification is too costly (e.g., in robotics). In fact, it is sometimes easier to learn control policies directly from data, rather than learning a model (the quintessential example supporting this claim is PID control~\cite{KJA-TH:95-book}).

A challenging problem in systems control is optimal trajectory tracking, where a control policy is computed based on output feedback that drives a dynamical system along a desired output trajectory while minimizing a stage cost and respecting safety constraints. A special case of the trajectory tracking problem is regulation, in which a control policy drives the system to an equilibrium point. One of the most celebrated and widely used control techniques for trajectory tracking is receding horizon Model Predictive Control (MPC), precisely because it allows one to include safety considerations during control design~\cite{JBR-DQM:09-book, EFC-CBA:13, AB-MM:99, DQM:14, FB-AB-MM:17-book}. Applications in which trajectory tracking is approached via MPC include autonomous driving~\cite{MB-JF-SE-JCG:17}, autonomous flight~\cite{IP-SO-RB-JBS-CS-SIN:13}, mobile robots~\cite{FB-EF-MP-KS-SLS:11}, and smart energy systems~\cite{GD-AG-RSS-JL:17}, among others. The key ingredient for MPC is an accurate parametric state space model of the system (represented by state space matrices), but obtaining such a model is often the most time-consuming and expensive part of control design~\cite{HH:05, BAO:96,JR:93}. 

In the context of unknown black-box systems, there is no approach which solves the optimal trajectory tracking problem subject to constraints and partial (output) observations. However, some more benign variations of the optimal trajectory tracking problem have been approached using data-driven and learning based methods. We do not attempt to provide a comprehensive survey of the recent, albeit already vast, literature. Rather, we single out a few references that are representative for (and at the forefront of) different approaches in the literature.

We begin with reinforcement learning approaches in which control actions are chosen to maximize a reward~\cite{FL-DV-KGV:12,YO-MG-RJ:17, BK-FLL-HM-AK-MN:14,AMD-SM:17}. This requires either exploring the system via random control actions, or exploiting the knowledge gained by applying optimal control actions. Reinforcement learning approaches usually require a large number of data samples to perform well, and are often very sensitive to hyper-parameters leading to non-reproducible and highly variable outcomes~\cite{BR:18,RI-HP-MG-DP:17}. Additionally, these approaches do not address all of the challenges present in the optimal trajectory tracking problem; namely, they generally do not take into account safety constraints, and typically assume full state feedback.

Other approaches propose performing sequential system identification (ID) and control. System ID can be used to produce an approximate model as well as provide finite sample guarantees quantifying model uncertainty, thus allowing for robust control design~\cite{MCC-EW:02, MV-RLK:08, ST-RB-AP-BR:17}. In this spirit, an end-to-end ID and control pipeline is given in~\cite{RB-NM-BR:18,SD-HM-NM-BR-ST:18} and arrives at a data-driven control solution with guarantees on the sample efficiency, stability, performance, and robustness. The system identification step in these approaches disregards one of the main advantages of a data-driven approach: independence from an underlying parametric representation (e.g., state space representation). In fact, non-parametric learning approaches often outperform parametric approaches (see, e.g., Gaussian processes for regression~\cite{CER:04}). Additionally, they only consider regulation, rely on having full state information, and do not enforce constraint satisfaction. 

Beyond reinforcement learning and sequential ID and control, there are many other adaptive control and safe learning approaches~\cite{FB-MT-AS-AK:17, FJF-AAK-MNZ-SK-JG-CJT:17}. However, they rely on a-priori stabilizing controllers and safe regions, and thus apply only to a small class of problems. 

MPC based on Dynamic Matrix Control has been historically used as a data-driven control technique, in which zero-initial condition step responses are used to predict future trajectories~\cite{RWF-AB:85, CEG-DMP-MM:89}. Although this technique has many limitations~\cite{PL-JHL-MM-SS:95}, it motivates the use of a non-parametric predictive control model. Other non-parametric predictive models have been proposed in~\cite{IM-PR:08, WF-BDM-PVO-MG:99, AA-MJA-DS-DS:18}. These methods do not solve the problem of optimal trajectory tracking with constraints, but serve as building blocks for our approach.

Here we present a \textbf{D}ata-\textbf{e}nabl\textbf{e}d \textbf{P}redictive \textbf{C}ontrol (DeePC) algorithm. Unlike classical MPC and the learning-based control techniques outlined above, the DeePC algorithm does not rely on a parametric system representation. Instead, similar to~\cite{IM-PR:08}, we approach the problem from a behavioural systems theory perspective~\cite{JCW:86, JCW-PR-IM-BDM:05, IM-JCW-SVH-BDM:06-book}. Rather than attempting to learn a parametric system model, we aim at learning the system's ``behaviour'' (see Section~\ref{sec:preliminaries} for the precise definition). Our novel predictive control strategy computes optimal controls for unknown systems using real-time output feedback via a receding horizon implementation, allowing one to incorporate input/output constraints to ensure safety. We formally show the equivalence of the DeePC algorithm to the classical MPC algorithm in the special case of deterministic linear time-invariant (LTI) systems. Our approach is much simpler to implement than the learning-based techniques above, as well as model-based approaches which require system identification and state observer design. In contrast to most MPC formulations (where full state measurement is required), the DeePC algorithm only requires output measurements. Additionally, since our approach does not rely on a parametric system model, we are hopeful that it can also be applied beyond deterministic LTI systems. To apply the algorithm to such systems, we propose some preliminary though insightful regularizations (e.g., low-rank approximations~\cite{SC:15}, and introduction of auxiliary slack variables~\cite{RF:13-book}), and reason as to why these regularizations improve performance. All of the results are validated with a nonlinear and stochastic aerial robotics case study, in which the DeePC algorithm is shown to outperform sequential ID with MPC. 

The remainder of the paper is organized as follows: in Section~\ref{sec:problem}, we formally define the problem. In Section~\ref{sec:MPC} we review the basic notion of MPC. In Section~\ref{sec:preliminaries} we introduce behavioural system theory. Section~\ref{sec:DeePC} contains the DeePC algorithm. In Section~\ref{sec:beyondLTI} we simulate the DeePC algorithm and compare it with sequential ID and MPC using a quadcopter simulation. We conclude the paper in Section~\ref{sec:conclusion}.
\section{Problem Statement}\label{sec:problem}
Consider the discrete-time system given by
\begin{equation}\label{eq:LTIsystem}
\begin{cases}
x(t+1)=Ax(t)+Bu(t) \\
y(t)=Cx(t)+Du(t),
\end{cases}
\end{equation}
where $A\in\real^{n\times n}$, $B\in\real^{n\times m}$, $C\in\real^{p\times n}$, $D\in\real^{p\times m}$, and $x(t)\in \real^n$, $u(t)\in \real^m$, $y(t)\in \real^p$ are respectively the state, control input, and output of the system at time $t\in \integersnonnegative$. Given a desired reference trajectory $r=(r_0,r_1,\dots)\in (\real^p)^{\integersnonnegative}$, input constraint set $\mathcal{U}\subseteq \real^{m}$, output constraint set $\mathcal{Y}\subseteq\real^{p}$, we wish to apply control inputs such that the system output tracks the reference trajectory $r$ while satisfying constraints and optimizing a cost function. Tracking of the trivial trajectory $r=(0,0,\dots)$ is simply regulation.

In the case when the model for the system is \emph{known}, i.e., matrices $A$, $B$, $C$ and $D$ are known, the problem can be approached using MPC (see Section~\ref{sec:MPC}). This paper focuses on the above trajectory tracking problem in the case when the model for system~\eqref{eq:LTIsystem} is \emph{unknown}, but input/output data samples are available.
\section{MPC: A Brief Overview}\label{sec:MPC}

We outline the well-known receding-horizon model predictive control and estimation algorithm for trajectory tracking when the model of system~\eqref{eq:LTIsystem} is known (see, e.g.,~\cite{AF-DL-IA-TA-EFC:09}).
Consider the following optimization problem:
\begin{align}\label{eq:MPC}
\underset{u,x,y}{\textup{minimize}}\quad
& \sum_{k=0}^{N-1}\left(\left\|y_k-r_{t+k}\right\|_Q^2 +\left\|u_{k}\right\|_R^2\right) \nonumber \\
\text{subject to\quad}
& x_{k+1}=Ax_k+Bu_k, \;\forall k \in \{0,\dots, N-1\},\nonumber \\
& y_k=Cx_k+Du_k, \; \forall k \in\{0,\dots, N-1\}, \nonumber \\
& x_0=\hat{x}(t), \\
& u_k\in \mathcal{U},\; \forall k \in \{0,\dots,N-1\}, \nonumber \\
& y_k\in\mathcal{Y}, \; \forall k \in \{0,\dots,N-1\},\nonumber
\end{align}
where $N\in \integerspositive$ is the time horizon, $u=(u_0,\dots,u_{N-1})$, $x=(x_0,\dots,x_{N})$, $y=(y_0,\dots,y_{N-1})$ are the decision variables, and $r_{t+k} \in\real^p$ is the desired reference at time $t+k$, where $t\in \integersnonnegative$ is the time at which the optimization problem should be solved. The norm $\|u_k\|_R$ denotes the quadratic form $u_k^TRu_k$ (similarly for $\|\cdot\|_Q$), where $R\in \real^{m\times m}$ is the control cost matrix and $Q\in\real^{p\times p}$ is the output cost matrix. The estimated state at time $t$ is denoted by $\hat{x}(t)$ and the predicted state and output at time $t+k$ are denoted by $x_k$ and $y_k$, respectively. If the entire state is measured then $\hat{x}(t)=x(t)$. When the state measurement is not available, but system~\eqref{eq:LTIsystem} is observable, an observer is
typically used to estimate the state based on knowledge of the system~\eqref{eq:LTIsystem} and the measured output $y$~\cite{DQM:14}. One may also combine the control and estimation into a single min-max optimization problem~\cite{DAC-JPH:17}. 

The classical MPC algorithm involves solving optimization problem~\eqref{eq:MPC} in a receding horizon manner.
\begin{algorithm}
\caption{MPC}\label{alg:MPC}
\textbf{Input:} $(A,B,C,D)$, reference trajectory $r$, past input/output data $(u,y)$, constraint sets $\mathcal{U}$ and $\mathcal{Y}$, and performance matrices $Q$ and $R$
\begin{enumerate}
\item Generate state estimate $\hat{x}(t)$ using past input/output data.\label{step:MPC1}
\item Solve~\eqref{eq:MPC} for $u^{\star}=(u^{\star}_0,\dots,u^{\star}_{N-1})$. \label{step:MPC2}
\item Apply inputs $(u(t),\dots,u(t+s))=(u^{\star}_0,\dots,u_s^{\star})$ for some $s\leq N-1$.\label{step:MPC3}
\item Set $t$ to $t+s$ and update past input/output data.\label{step:MPC4}
\item Return to~\ref{step:MPC1}.
\end{enumerate}
\end{algorithm}

Note that choosing $s>0$ reduces the number of computations, and in some cases may improve performance~\cite{LG-JP-MS-KW:10}.

Under standard assumptions, it can be shown that the MPC algorithm is recursively feasible and stabilizing~\cite{EFC-CBA:13}. One crucial ingredient for MPC is an accurate model of the system; this is needed both to formulate problem~\eqref{eq:MPC} and, in cases where the state is not measured exactly, to generate the initial state estimates $\hat{x}(t)$. The need for a model is traditionally addressed through system identification~\cite{LL:86-book}, where observations of the system are collected offline before the online control operation begins and are used to estimate a model of the form~\eqref{eq:LTIsystem} that matches the observed data in an appropriate sense.
 For complex systems, this can be a cumbersome and expensive process~\cite{HH:05, BAO:96, JR:93}. 
For this reason, we propose a Data-enabled Predictive Control (DeePC) algorithm which learns the behaviour of the system and does not require an explicit model, system identification, or state estimation (see Algorithm~\ref{alg:DeePC}).

\section{Preliminaries}\label{sec:preliminaries}
\subsection{Non-parametric system representation}

Behavioural system theory is a natural way of viewing a dynamical system when one is not concerned with a particular system representation, but rather the subspace of the signal space in which trajectories of the system live. This is in contrast with classical system theory, where a particular parametric system representation (such as the state-space model~\eqref{eq:LTIsystem}) is used to describe the input/output behaviour, and properties of the system are derived by studying the chosen system representation.
Following~\cite{IM-JCW-SVH-BDM:06-book}, we define a dynamical system and its properties in terms of its behaviour.
\begin{definition}
A \emph{dynamical system} is a $3$-tuple $(\integersnonnegative,\mathbb{W},\bv)$ where $\integersnonnegative$ is the discrete-time axis, $\mathbb{W}$ is a signal space, and $\bv\subseteq \mathbb{W}^{\integersnonnegative}$ is the behaviour.
\end{definition}
\begin{definition}
Let $(\integersnonnegative,\mathbb{W},\bv)$ be a dynamical system.
\begin{enumerate}[(i)]
\item $(\integersnonnegative,\mathbb{W},\bv)$ is \emph{linear} if $\mathbb{W}$ is a vector space and $\bv$ is a linear subspace of $\mathbb{W}^{\integersnonnegative}$.\label{item:systemdef1}
\item $(\integersnonnegative,\mathbb{W},\bv)$ is \emph{time invariant} if $\bv \subseteq \sigma\bv$ where $\sigma \colon \mathbb{W}^{\integersnonnegative}\to \mathbb{W}^{\integersnonnegative}$ is the forward time shift defined by $(\sigma w)(t)=w(t+1)$ and $\sigma\bv=\{\sigma w \mid w\in \bv\}$.\label{item:systemdef2}
\item $(\integersnonnegative,\mathbb{W},\bv)$ is \emph{complete} if $\bv$ is closed in the topology of pointwise convergence. \label{item:systemdef3}
\end{enumerate}
\end{definition}
\smallskip
Note that if a dynamical system satisfies~\ref{item:systemdef1}-\ref{item:systemdef2} then~\ref{item:systemdef3} is equivalent to finite dimensionality of $\mathbb{W}$ (see~\cite[Section 7.1]{IM-JCW-SVH-BDM:06-book}). We denote the class of systems $(\integersnonnegative,\real^{m+p},\bv)$ satisfying~\ref{item:systemdef1}-\ref{item:systemdef3} by $\mathcal{L}^{m+p}$, where $m,p \in \integersnonnegative$. With slight abuse of notation and terminology, we denote a dynamical system in $\mathcal{L}^{m+p}$ only by its behaviour $\bv$.

Next, we define the set $\bv_T=\{w\in(\real^{m+p})^T \mid \exists\; v\in \bv\; \text{s.t.}\; w_t=v_t,\; 1\leq t\leq T\}$ of trajectories truncated to a window of length $T$. Without loss of generality, we assume that $\bv$ can be written as the product space of two sub-behaviours $\bv^u$ and $\bv^y$, where $\bv^u=(\real^m)^{\integersnonnegative}$ and $\bv^y \subseteq (\real^p)^{\integersnonnegative}$ are the spaces of input and output signals, respectively (see~\cite[Theorem 2]{JCW:86}), that is, any trajectory $w\in \bv$ can be written as $w=\col(u,y)$, where $\col(u,y)\defeq (u^T,y^T)^T$.
We now present two concepts: controllability, and persistency of excitation.
\begin{definition}\label{def:controllable}
A system $\bv\in\mathcal{L}^{m+p}$ is \emph{controllable} if for every $T\in \integerspositive$, $w^1\in \bv_T$, $w^2\in\bv$ there exists $w\in \bv$ and $T'\in \integerspositive$ such that $w_t=w^1_t$ for $1\leq t \leq T$ and $w_t=w^2_{t-T-T'}$ for $t>T+T'$.
\end{definition}
\smallskip
In other words, a behavioural system is controllable if any two trajectories can be patched together in finite time.
\begin{definition}
Let $L,T\in\integerspositive$ such that $T\geq L$. The signal $u=\col(u_1,\dots,u_T)\in \real^{Tm}$ is \emph{persistently exciting of order $L$} if the Hankel matrix 
\[
\mathscr{H}_L(u)\defeq
\begin{pmatrix}
&u_1 &\cdots &u_{T-L+1} \\
&\vdots &\ddots &\vdots \\
&u_L &\cdots &u_T
\end{pmatrix}
\]
is of full row rank.
\end{definition}
\smallskip
The term persistently exciting describes an input signal sufficiently rich and long as to \emph{excite} the system yielding an output sequence that is representative for the system's behaviour.

\subsection{Parametric system representation}
There are several equivalent ways of representing a behavioural system $\bv \in \mathcal{L}^{m+p}$, including the classical input/output/state representation~\eqref{eq:LTIsystem} denoted by $\bv(A,B,C,D)=\{\col(u,y) \in (\real^{m+p})^{\integersnonnegative}\mid \exists\; x\in(\real^n)^{\integersnonnegative}\; \text{s.t.}\;  \sigma x=Ax+Bu,\; y=Cx+Du\}$. The input/output/state representation of smallest order (i.e., smallest state dimension) is called a \emph{minimal representation}, and we denote its order by $\bm{n}(\bv)$. Another important property of a system $\bv\in\mathcal{L}^{m+p}$ is the \emph{lag} defined by the smallest integer $\ell\in \integerspositive$ such that the observability matrix $\mathscr{O}_{\ell}(A,C)\defeq \col\left(C,CA, \dots,CA^{\ell-1}\right)$ has rank $\bm{n}(\bv)$. We denote the lag by $\bm{\ell}(\bv)$ (see~\cite[Section 7.2]{IM-JCW-SVH-BDM:06-book} for equivalent input/output/state representation free definitions of lag). The lower triangular Toeplitz matrix consisting of $A,B,C,D$ is denoted by 
\[
\mathscr{T}_N(A,B,C,D) \defeq \begin{pmatrix}
D &0 &\cdots &0 \\
CB &D &\cdots &0 \\
\vdots &\ddots &\ddots &\vdots \\
CA^{N-2}B &\cdots &CB &D
\end{pmatrix}.
\]
We can now present a uniqueness result.
\begin{lemma}\longthmtitle{\hspace{1sp}\cite[Lemma 1]{IM-PR:08}}\label{lem:initialstate}
Let $\bv \in \mathcal{L}^{m+p}$ and $\bv(A,B,C,D)$ a minimal input/output/state representation. Let $\Tini,N \in \integerspositive$ with $\Tini \geq \bm{\ell}(\bv)$ and $\col(\uini,u,\yini,y)\in \bv_{\Tini+N}$. Then there exists a unique $\xini \in \real^{\bm{n}(\bv)}$ such that
\begin{equation}\label{eq:initialstate}
y=\mathscr{O}_N(A,C)\xini +\mathscr{T}_N(A,B,C,D)u.
\end{equation}
\end{lemma}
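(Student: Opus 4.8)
The plan is to exploit the recursive structure of the state-space model~\eqref{eq:LTIsystem} to first establish the \emph{existence} of the decomposition~\eqref{eq:initialstate}, and then invoke the lag condition $\Tini \geq \bm{\ell}(\bv)$ to promote this to \emph{uniqueness}. Since $\col(\uini,u,\yini,y)\in \bv_{\Tini+N}$ is a genuine trajectory of a minimal representation $\bv(A,B,C,D)$, there exists some state sequence $x$ satisfying $\sigma x = Ax + Bu$ and $y = Cx + Du$. Let $\xini$ denote the value of this state at the time index immediately after the initial window of length $\Tini$, i.e.\ the state from which the future inputs $u=(u_0,\dots,u_{N-1})$ and future outputs $y=(y_0,\dots,y_{N-1})$ are generated.

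First I would unroll the state recursion from $\xini$ forward over the horizon $N$. Writing $x_0 = \xini$ and iterating $x_{k+1}=Ax_k+Bu_k$ gives $x_k = A^k \xini + \sum_{j=0}^{k-1} A^{k-1-j} B u_j$, and substituting into $y_k = Cx_k + Du_k$ yields, after stacking over $k=0,\dots,N-1$, exactly the identity $y=\mathscr{O}_N(A,C)\xini + \mathscr{T}_N(A,B,C,D)u$. This is a routine computation: the $k$-th block row of $\mathscr{O}_N(A,C)$ is $CA^k$, matching the coefficient of $\xini$, while the Toeplitz structure of $\mathscr{T}_N(A,B,C,D)$ reproduces precisely the convolution sum $\sum_{j=0}^{k-1}CA^{k-1-j}Bu_j + Du_k$ that multiplies the inputs. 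This establishes existence of an $\xini$ satisfying~\eqref{eq:initialstate}.

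The main obstacle is uniqueness, and this is exactly where the hypothesis $\Tini \geq \bm{\ell}(\bv)$ earns its keep. Suppose two initial states $\xini$ and $\xini'$ both satisfy~\eqref{eq:initialstate}; subtracting gives $\mathscr{O}_N(A,C)(\xini-\xini')=0$. I would argue that $\xini$ is in fact determined by the \emph{past} data $\col(\uini,\yini)$ of length $\Tini$, not merely the future. By the same unrolling applied to the window of length $\Tini$ preceding $\xini$, the past outputs $\yini$ are an affine function of the state $\Tini$ steps earlier together with the past inputs $\uini$; propagating forward, the state $\xini$ is recoverable from $(\uini,\yini)$ provided the relevant observability map is injective. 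Concretely, because the lag $\bm{\ell}(\bv)$ is the smallest integer for which $\mathscr{O}_{\bm{\ell}}(A,C)$ attains rank $\bm{n}(\bv)$, and the representation is \emph{minimal} (so $\bm{n}(\bv)$ is the true state dimension), the matrix $\mathscr{O}_{\Tini}(A,C)$ has full column rank $\bm{n}(\bv)$ whenever $\Tini \geq \bm{\ell}(\bv)$. Full column rank of the observability matrix means the map from state to output history is injective, so the state consistent with the observed past input/output pair $(\uini,\yini)$ is unique.

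Thus the logic I would carry out is: (i) existence of $\xini$ by forward substitution of the state recursion, which is pure bookkeeping; (ii) identify $\xini$ as the unique state consistent with the past window $(\uini,\yini)$, using minimality to equate $\bm{n}(\bv)$ with the actual state dimension and using $\Tini \geq \bm{\ell}(\bv)$ to guarantee $\mathscr{O}_{\Tini}(A,C)$ has full column rank; and (iii) conclude that any two states satisfying~\eqref{eq:initialstate} must both equal this uniquely reconstructed state and hence coincide. The only subtle point requiring care is the precise interplay between minimality (which controls the \emph{column} dimension of $\mathscr{O}$ via $\bm{n}(\bv)$) and the lag (which controls how many block rows are needed to reach full rank); getting this pairing right is the crux, and everything else is direct matrix algebra.
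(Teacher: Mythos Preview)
The paper does not supply its own proof of this lemma; it is quoted verbatim from \cite[Lemma~1]{IM-PR:08} and used as a black box. Your argument is the standard one and is correct in substance: existence by unrolling the state recursion over the future window, and uniqueness by reconstructing $\xini$ from the past window $(\uini,\yini)$ via the full-column-rank observability matrix $\mathscr{O}_{\Tini}(A,C)$, which is guaranteed by $\Tini\geq\bm\ell(\bv)$ together with minimality.

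One small wording issue in your step~(iii): you write that ``any two states satisfying~\eqref{eq:initialstate} must both equal this uniquely reconstructed state.'' Strictly speaking, when $N<\bm\ell(\bv)$ the matrix $\mathscr{O}_N(A,C)$ need not have full column rank, so there can be many vectors $\xini$ satisfying the \emph{future} equation~\eqref{eq:initialstate} in isolation. The uniqueness claimed by the lemma is among those $\xini$ that arise from a state trajectory compatible with the \emph{entire} input/output sequence $\col(\uini,u,\yini,y)\in\bv_{\Tini+N}$; it is the past window that pins $\xini$ down, exactly as you argue in step~(ii). So the logic should read: any state trajectory consistent with the full I/O data has a uniquely determined value at the switchover time (by step~(ii)), and that value necessarily satisfies~\eqref{eq:initialstate} (by step~(i)). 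With that minor re-ordering your proof is complete and matches the argument in the cited source.
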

\smallskip
In other words, given a sufficiently long window of initial system data $\col(\uini,\yini)$, the state to which the system is driven by the sequence of inputs $\uini$ is unique. Furthermore, if the matrices $A,B,C,D$ are known, the state $\xini$ can be computed. We now present a result known in behavioural systems theory as the \emph{Fundamental Lemma}~\cite{IM-PR:08}.%
\begin{lemma}\longthmtitle{\hspace{1sp}\cite[Theorem 1]{JCW-PR-IM-BDM:05}}\label{lem:fundamental}
Consider a controllable system $\bv\in\mathcal{L}^{m+p}$. Let $T,t\in\integerspositive$, and $w=\col(u,y)\in \bv_T$. Assume $u$ to be persistently exciting of order $t+\bm{n}(\bv)$. Then $\textup{colspan}(\mathscr{H}_t(w))=\bv_t$.
\end{lemma}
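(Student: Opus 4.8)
The plan is to prove the two inclusions $\textup{colspan}(\mathscr{H}_t(w)) \subseteq \bv_t$ and $\bv_t \subseteq \textup{colspan}(\mathscr{H}_t(w))$ separately, with the second being the substantive half. For the first inclusion I would observe that each column of $\mathscr{H}_t(w)$ is a length-$t$ window of the trajectory $w$. Since $\bv$ is linear and time invariant, every such window lies in $\bv_t$, and because $\bv\in\mathcal{L}^{m+p}$ is linear the truncation $\bv_t$ is a linear subspace of $(\real^{m+p})^t$; hence any linear combination of columns remains in $\bv_t$, giving $\textup{colspan}(\mathscr{H}_t(w)) \subseteq \bv_t$.

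For the reverse inclusion I would pass to a minimal input/output/state representation $\bv(A,B,C,D)$ with $\bm{n}(\bv)=n$, and introduce the state sequence $X \defeq (x_1,\dots,x_{T-t+1})$ of $w$, where $x_j$ is the state at the start of the $j$-th window (well defined by minimality and Lemma~\ref{lem:initialstate}). The dynamics then yield the factorization
\[
\mathscr{H}_t(w) = \begin{pmatrix} \mathscr{H}_t(u) \\ \mathscr{H}_t(y) \end{pmatrix} = \begin{pmatrix} I & 0 \\ \mathscr{T}_t(A,B,C,D) & \mathscr{O}_t(A,C) \end{pmatrix}\begin{pmatrix} \mathscr{H}_t(u) \\ X \end{pmatrix},
\]
which is the windowed form of~\eqref{eq:initialstate}. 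The same block matrix, applied to the full space $\real^{mt}\times\real^{n}$ of all input windows and all initial states, has image exactly $\bv_t$ (again by Lemma~\ref{lem:initialstate} and linearity). Consequently $\textup{colspan}(\mathscr{H}_t(w)) = \bv_t$ follows once I show that $\col(\mathscr{H}_t(u), X)$ is surjective, i.e. has full row rank $mt+n$.

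Establishing that rank identity is the crux and the main obstacle. I would argue by contradiction: suppose a nonzero left annihilator $(g^\top, h^\top)$ with $g\in\real^{mt}$, $h\in\real^n$ satisfies $g^\top\mathscr{H}_t(u) + h^\top X = 0$. Reading this window by window gives, for every $j$, a fixed linear relation between the inputs $u_j,\dots,u_{j+t-1}$ and the state $x_j$; advancing $j$ and folding in the state recursion $x_{j+1}=Ax_j+Bu_j$ converts this into a single fixed-coefficient relation satisfied by every length-$(t+n)$ input window. Here controllability of $\bv$ is essential: it forces $X$ to have full row rank $n$ and guarantees that such a cross-window relation can hold only if the induced input annihilator is trivial. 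I would then invoke persistency of excitation of order $t+\bm{n}(\bv)=t+n$, which makes $\mathscr{H}_{t+n}(u)$ full row rank and hence forbids any nontrivial fixed-coefficient relation among length-$(t+n)$ input windows, forcing $g=0$ and then $h=0$.

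Finally I would assemble the pieces: surjectivity of $\col(\mathscr{H}_t(u),X)$ combined with the factorization gives $\textup{colspan}(\mathscr{H}_t(w)) = \bv_t$, completing the reverse inclusion and hence the lemma. The delicate part is the contradiction step, where reindexing the windows, collapsing the state recursion into one annihilator acting on the input Hankel matrix, and matching the excitation order $t+n$ to the effective length of that annihilator must all line up; I expect this bookkeeping, rather than either inclusion per se, to be where the real work lies.
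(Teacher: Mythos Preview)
The paper does not supply its own proof of this lemma: it is quoted verbatim as \cite[Theorem~1]{JCW-PR-IM-BDM:05} and used as a black box, so there is nothing in the paper to compare your argument against. What you have sketched is essentially the original proof from that reference (Willems, Rapisarda, Markovsky, De~Moor, 2005): the trivial inclusion via time invariance and linearity of $\bv_t$, the factorization of $\mathscr{H}_t(w)$ through $\col(\mathscr{H}_t(u),X)$ using a minimal state realization, and the reduction of the equality $\textup{colspan}(\mathscr{H}_t(w))=\bv_t$ to the rank identity $\operatorname{rank}\col(\mathscr{H}_t(u),X)=mt+n$.

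One small caution on your contradiction step: the assertion that ``controllability of $\bv$ forces $X$ to have full row rank $n$'' is not true on its own (take $u\equiv 0$). In the original argument controllability and persistency of excitation are used \emph{together}: controllability lets you express $x_j$ as a linear function of $x_{j-n}$ and $u_{j-n},\dots,u_{j-1}$ via the reachability matrix, and substituting this into the annihilator relation $g^\top \mathscr{H}_t(u)+h^\top X=0$ produces, after eliminating the residual dependence on the initial state, a nonzero left annihilator of $\mathscr{H}_{t+n}(u)$, which the PE hypothesis of order $t+n$ rules out. Your phrase ``folding in the state recursion'' is pointing at exactly this manoeuvre, and you are right that the index bookkeeping there is where the real work sits; just be careful not to invoke $\operatorname{rank} X = n$ as a separate, prior consequence of controllability alone.
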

\smallskip
Note that the number of data points $T$ must be at least $(m+1)(t+\bm{n}(\bv))-1$ in order to satisfy the persistency of excitation condition. Lemma~\ref{lem:fundamental} replaces the need for a model or system identification process and allows for any trajectory of a controllable LTI system to be constructed using a finite number of data samples generated by a  sufficiently rich (in particular, persistently exciting) input signal. In a sense, the Hankel matrix is itself a non-parametric predictive model based on raw data. It allows one to implicitly estimate the state of an LTI system, predict its future behaviour, and design optimal feedforward control inputs~\cite{IM-PR:08}. Lemma~\ref{lem:initialstate} and Lemma~\ref{lem:fundamental} can be used together in a predictive control algorithm similar to Algorithm~\ref{alg:MPC} by replacing the state observer (respectively, the system model) with the data $\col(\uini,\yini)$ (respectively, $\mathscr{H}_t(w)$).

\section{DeePC: A data-enabled predictive control algorithm}\label{sec:DeePC}
\subsection{Data collection}
We begin by assuming that the data is generated by an unknown controllable LTI system $\bv\in\mathcal{L}^{m+p}$ with a minimal input/output/state representation $\bv(A,B,C,D)$. Let $T,\Tini,N \in \integerspositive$ such that $T\geq (m+1)(\Tini+N+\bm{n}(\bv))-1$. Let $u^{\textup{d}}=\col(u_1^{\textup{d}},\dots,u_T^{\textup{d}})\in \real^{Tm}$ be a sequence of $T$ inputs applied to $\bv$, and $y^{\textup{d}}=(y_1^{\textup{d}},\dots,y_T^{\textup{d}})\in \real^{Tp}$ the corresponding outputs. Furthermore, assume $u^{\textup{d}}$ is persistently exciting of order $\Tini+N+\bm{n}(\bv)$. The superscript $\textup{d}$ is used to indicate that these are sequences of data samples collected during an offline procedure from the unknown system. Note that the data $\col(u^{\textup{d}},y^{\textup{d}})\in\bv_T$ can be equivalently thought of as coming from the minimimal input/output/state representation $\bv(A,B,C,D)$. Next, we partition the input/output data into two parts which we call \emph{past data} and \emph{future data}. More formally, define
\begin{equation}\label{eq:UpUfYpYf}
\begin{pmatrix}
\Up \\ \Uf 
\end{pmatrix}\defeq \mathscr{H}_{\Tini+N}(u^{\textup{d}}), \quad
\begin{pmatrix}
\Yp \\ \Yf 
\end{pmatrix}\defeq \mathscr{H}_{\Tini+N}(y^{\textup{d}}),
\end{equation}
where $\Up$ consists of the first $\Tini$ block rows of $\mathscr{H}_{\Tini+N}(u^{\textup{d}})$ and $\Uf$ consists of the last $N$ block rows of $\mathscr{H}_{\Tini+N}(u^{\textup{d}})$ (similarly for $\Yp$ and $\Yf$).
In the sequel, past data denoted by the subscript $\mathrm{p}$ will be used to estimate the initial condition of the underlying state, whereas the future data denoted by the subscript $\mathrm{f}$ will be used to predict the future trajectories.

\subsection{State estimation and trajectory prediction}
By Lemma~\ref{lem:fundamental}, we can construct any $\Tini+N$ length trajectory of $\bv_{\Tini+N}$ using the data collected. Indeed, a trajectory $\col(\uini,u,\yini,y)$ belongs to $\bv_{\Tini+N}$ if and only if there exists $g\in \real^{T-\Tini-N+1}$ such that
\begin{equation}\label{eq:datamodel}
\begin{pmatrix}
\Up \\ \Yp \\ \Uf \\ \Yf 
\end{pmatrix}
g=
\begin{pmatrix}
\uini \\ \yini \\ u \\ y
\end{pmatrix}.
\end{equation}
If $\Tini \geq \bm{\ell}(\bv)$, then Lemma~\ref{lem:initialstate} implies that there exists a unique $\xini \in \real^{\bm{n}(\bv)}$ such that the output $y$ is uniquely determined by~\eqref{eq:initialstate}.
Intuitively, the trajectory $\col(\uini,\yini)$ fixes the underlying initial state $\xini$ from which the trajectory $\col(u,y)$ evolves. Note, however, that~\eqref{eq:datamodel} does not require the input/output/state representation of the system to be known. The state $\xini$ is only ``fixed'' implicitly by $\col(\uini, \yini)$. As first shown in~\cite{IM-PR:08}, this allows one to predict future trajectories based on a given initial trajectory $\col(\uini,\yini)\in\bv_{\Tini}$, and the precollected data in $\Up$, $\Uf$, $\Yp$, and $\Yf$. Indeed, given an initial trajectory $\col(\uini,\yini)\in\bv_{\Tini}$ of length $\Tini \geq \bm{\ell}(\bv)$ and a sequence of future inputs $u\in \real^{Nm}$, the first three block equations of~\eqref{eq:datamodel} can be solved for $g$. The sequence of future outputs are then given by $y=\Yf g$. Furthermore, by Lemma~\ref{lem:initialstate} the vector $y$ computed contains the unique sequence of outputs corresponding to the inputs $u$. Vice versa, given a desired reference output $y$ an associated feedforward control input can be calculated~\cite{IM-PR:08}.

\subsection{DeePC algorithm}
Given a time horizon $N \in \integerspositive$, a reference trajectory $r=(r_0,r_1,\dots)\in (\real^{p})^{\integersnonnegative}$, past input/output data $\col(\uini,\yini)\in \bv_{\Tini}$, input constraint set $\mathcal{U}\subseteq \real^m$, output constraint set $\mathcal{Y}\subseteq \real^p$, output cost matrix $Q\in \real^{p\times p}$, and control cost matrix $R \in \real^{m\times m}$, we formulate the following optimization problem:
\begin{align}\label{eq:DeePC}
\underset{g,u,y}{\text{minimize}}\quad
& \sum_{k=0}^{N-1}\left(\left\|y_k-r_{t+k}\right\|_Q^2 +\left\|u_{k}\right\|_R^2\right)\nonumber \\
\text{subject to\quad}
& \begin{pmatrix}
\Up \\ \Yp \\ \Uf \\ \Yf
\end{pmatrix}g
=\begin{pmatrix}
\uini \\ \yini \\ u \\ y
\end{pmatrix}, \\
& u_k\in \mathcal{U}, \; \forall k \in \{0,\dots,N-1\}, \nonumber \\
& y_k\in\mathcal{Y},\; \forall k \in \{0,\dots,N-1\}.\nonumber
\end{align}
Note here, that $u$ and $y$ are not independent decision variables of the optimization problem. Rather they are described completely by the fixed data matrices $\Uf$ and $\Yf$ and the decision variable $g$. A comparison of the two optimal control problems~\eqref{eq:MPC} and~\eqref{eq:DeePC} yields only a single (though key) difference; the model and the state estimate in~\eqref{eq:MPC} are replaced completely with input/output data samples in~\eqref{eq:DeePC}. We now present the DeePC algorithm.
\begin{algorithm}
\caption{DeePC}\label{alg:DeePC}
\textbf{Input:} $\col(u^{\textup{d}},y^{\textup{d}})\in \bv_T$, reference trajectory $r\in\real^{Np}$, past input/output data $\col(\uini,\yini)\in\bv_{\Tini}$, constraint sets $\mathcal{U}$ and $\mathcal{Y}$, and performance matrices $Q$ and $R$
\begin{enumerate}
\item Solve~\eqref{eq:DeePC} for $g^{\star}$.\label{step:DeePCbegin}
\item Compute the optimal input sequence $u^{\star}=\Uf g^{\star}$.
\item Apply input $(u(t),\dots,u(t+s))=(u_0^{\star},\dots,u_s^{\star})$ for some $s\leq N-1$.
\item Set $t$ to $t+s$ and update $\uini$ and $\yini$ to the $\Tini$ most recent input/output measurements.\label{step:DeePCend}
\item Return to~\ref{step:DeePCbegin}.
\end{enumerate}
\end{algorithm}
\subsection{Equivalence of DeePC and MPC}
It can be shown that Algorithm~\ref{alg:MPC} and Algorithm~\ref{alg:DeePC} yield equivalent closed-loop trajectories under certain assumptions. We first show the equivalence of the feasible sets of~\eqref{eq:MPC} and~\eqref{eq:DeePC}.
\begin{theorem}\longthmtitle{Feasible Set Equivalence}\label{thm:main}
Consider a controllable LTI system $\bv \in \mathcal{L}^{m+p}$ with minimal input/ouput/state representation $\bv(A,B,C,D)$ given as in~\eqref{eq:LTIsystem}. Consider the MPC and DeePC optimization problems~\eqref{eq:MPC} and~\eqref{eq:DeePC}. Let $\Tini \geq \bm{\ell}(\bv)$ and $\col(\uini ,\yini)\in \bv_{\Tini}$ be the most recent input/output measurements from system~\eqref{eq:LTIsystem}. Assume that the data $\col(u^{\textup{d}},y^{\textup{d}})\in\bv_T$ in $\col(\Up,\Yp,\Uf,\Yf)$ is such that $u^{\textup{d}}$ is persistently exciting of order $\Tini+N+\bm{n}(\bv)$, where $T\geq(m+1)(\Tini+N+\bm{n}(\bv))-1$. Then there exists a state estimate $\hat{x}(t)$ in~\eqref{eq:MPC} such that the feasible sets of~\eqref{eq:MPC} and~\eqref{eq:DeePC} are equal.
\end{theorem}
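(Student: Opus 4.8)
The plan is to show that the projections of the two feasible sets onto the $(u,y)$-coordinates coincide. Since the input constraint $u_k\in\mathcal{U}$ and the output constraint $y_k\in\mathcal{Y}$ appear identically in \eqref{eq:MPC} and \eqref{eq:DeePC} and involve only $(u,y)$, whereas the state $x$ in \eqref{eq:MPC} and the vector $g$ in \eqref{eq:DeePC} are auxiliary variables eliminated by projection, it suffices to match the sets of $(u,y)$ consistent with the respective dynamic (equality) constraints and then intersect with the common constraint sets. First I would characterise the DeePC side: because $\bv$ is controllable and $u^{\textup{d}}$ is persistently exciting of order $\Tini+N+\bm{n}(\bv)$, Lemma~\ref{lem:fundamental} applied with window length $\Tini+N$ gives $\textup{colspan}(\mathscr{H}_{\Tini+N}(\col(u^{\textup{d}},y^{\textup{d}})))=\bv_{\Tini+N}$ (the persistency order is chosen exactly so that this depth is admissible). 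Consequently the columns of the matrix $\col(\Up,\Yp,\Uf,\Yf)$ span $\bv_{\Tini+N}$, so there exists $g$ solving the equality constraint in \eqref{eq:DeePC} if and only if $\col(\uini,\yini,u,y)\in\bv_{\Tini+N}$, independently of whether $g$ is unique.

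Next I would characterise the MPC side. Iterating the recursion from $x_0=\hat{x}(t)$ shows that $(u,y)$ satisfies the dynamic constraints of \eqref{eq:MPC} if and only if $y=\mathscr{O}_N(A,C)\hat{x}(t)+\mathscr{T}_N(A,B,C,D)u$, i.e. if and only if $(u,y)$ is the length-$N$ trajectory generated by $\bv(A,B,C,D)$ from the initial state $\hat{x}(t)$. The crux is to pick the state estimate $\hat{x}(t)$ so that this description agrees with the DeePC one. I would set $\hat{x}(t)=\xini$, the unique state furnished by Lemma~\ref{lem:initialstate} (using $\Tini\geq\bm{\ell}(\bv)$) that is associated with the measured past trajectory $\col(\uini,\yini)\in\bv_{\Tini}$.

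The step I expect to be the main obstacle is arguing that this $\xini$ is well defined as a function of $\col(\uini,\yini)$ \emph{alone}, rather than of the whole trajectory $\col(\uini,\yini,u,y)$ as Lemma~\ref{lem:initialstate} is literally stated. The resolution is that $\Tini\geq\bm{\ell}(\bv)$ makes the past window long enough for observability to pin down the state at the start of the past window uniquely from $\col(\uini,\yini)$; propagating that state forward under the past inputs $\uini$ fixes $\xini$ without reference to any future $(u,y)$. With $\hat{x}(t)=\xini$ so chosen, for every $(u,y)$ the equivalence $\col(\uini,\yini,u,y)\in\bv_{\Tini+N}\iff y=\mathscr{O}_N(A,C)\xini+\mathscr{T}_N(A,B,C,D)u$ is exactly the content of Lemma~\ref{lem:initialstate}, so the DeePC and MPC dynamic-feasibility descriptions coincide.

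Finally I would close both inclusions explicitly. Given any DeePC-feasible triple $(g,u,y)$, the induced $(u,y)$ makes $\col(\uini,\yini,u,y)\in\bv_{\Tini+N}$, hence by Lemma~\ref{lem:initialstate} it solves the MPC dynamics with $x_0=\xini$; completing $x$ by the recursion and retaining the satisfied constraints yields an MPC-feasible $(u,x,y)$. Conversely, given any MPC-feasible $(u,x,y)$ with $x_0=\xini$, the trajectory $\col(\uini,\yini,u,y)$ lies in $\bv_{\Tini+N}$, so Lemma~\ref{lem:fundamental} supplies a $g$ realising it, giving a DeePC-feasible triple. Because the constraints $u_k\in\mathcal{U}$ and $y_k\in\mathcal{Y}$ are identical in both problems, intersecting with them preserves the equality, which establishes that the feasible sets of \eqref{eq:MPC} and \eqref{eq:DeePC} are equal.
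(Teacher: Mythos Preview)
Your proposal is correct and follows essentially the same route as the paper's proof: invoke Lemma~\ref{lem:fundamental} to identify the DeePC equality constraint with membership in $\bv_{\Tini+N}$, rewrite the MPC dynamics as $y=\mathscr{O}_N(A,C)\hat{x}(t)+\mathscr{T}_N(A,B,C,D)u$, and then set $\hat{x}(t)=\xini$ from Lemma~\ref{lem:initialstate} to match the two descriptions. If anything, you are more explicit than the paper about the projection-to-$(u,y)$ framing and about why $\xini$ is determined by $\col(\uini,\yini)$ alone, which the paper glosses over in a clause.
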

\begin{proof}
We first look at the feasible set of~\eqref{eq:DeePC}. Since $u^{\textup{d}}$ is persistently exciting of order $\Tini+N+\bm{n}(\bv)$, then by Lemma~\ref{lem:fundamental} $\textup{image}\left(\col(\Up,\Yp,\Uf,\Yf)\right)=\bv_{\Tini+N}$, where $\Up$, $\Yp$, $\Uf$, $\Yf$ are defined as in~\eqref{eq:UpUfYpYf}. Hence, the feasible set of~\eqref{eq:DeePC} is equal to $\{(u,y)\in \mathcal{U}^N\times\mathcal{Y}^N\mid \col(\uini,u,\yini,y)\in\bv_{\Tini+N}\}$, where $\mathcal{U}^N$ is the cartesian product of $\mathcal{U}$ with itself $N$-times (similarly for $\mathcal{Y}^N$). Since the system $\bv$ yields an equivalent representation given by $\bv(A,B,C,D)$, then by Lemma~\ref{lem:initialstate} the feasible set of~\eqref{eq:DeePC} can be written as the set of pairs $(u,y)\in \mathcal{U}^N\times\mathcal{Y}^N$ satisfying
\[
y= \mathscr{O}_N(A,C)\xini+\mathscr{T}_N(A,B,C,D) u,
\]
where $\xini$ is uniquely determined from $\col(\uini,\yini)$. 
We now look at the feasible set of~\eqref{eq:MPC}. By rewriting the constraints in~\eqref{eq:MPC} we obtain
\[
y= \mathscr{O}_N(A,C)\hat{x}(t)+\mathscr{T}_N(A,B,C,D) u,
\quad u\in\mathcal{U}^N,\; y\in\mathcal{Y}^N,
\]
where $\hat{x}(t)$ is the estimation of the state $x(t)$ at time $t$. Setting the state estimate $\hat{x}(t)=\xini$ yields equal feasible sets since the state-space coordinates of the $\bv(A,B,C,D)$ and the system in~\eqref{eq:MPC} are identical.
\end{proof}

Note that the state estimate $\hat{x}(t)=\xini$ is a natural choice when full-state measurements are available or when input/output measurements are deterministic.
\begin{corollary}\longthmtitle{Equivalent Closed Loop Behaviour}\label{cor:main}
Consider the MPC Algorithm~\ref{alg:MPC} and the DeePC Algorithm~\ref{alg:DeePC} with $Q\succeq 0$, $R\succ 0$, and $\mathcal{U}$, $\mathcal{Y}$ convex and non-empty. Under the assumptions of Theorem~\ref{thm:main}, Algorithm~\ref{alg:MPC} and Algorithm~\ref{alg:DeePC} result in equivalent closed-loop behaviour, i.e., the optimal control sequence $u^{\star}$ and corresponding system output $y^{\star}$ at every iteration is identical.
\end{corollary}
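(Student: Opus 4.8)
The plan is to reduce the closed-loop equivalence to an iteration-by-iteration argument built on Theorem~\ref{thm:main}, via induction on the number of receding-horizon steps. The key observation is that the two optimization problems~\eqref{eq:MPC} and~\eqref{eq:DeePC} share an identical objective, which depends only on the pair $(u,y)$ and not on the auxiliary variables ($x$ in MPC, $g$ in DeePC). Since Theorem~\ref{thm:main} establishes that, under the choice $\hat{x}(t)=\xini$, the projections of the two feasible sets onto the $(u,y)$-coordinates coincide, the two problems are identical as optimization problems over $(u,y)$, and hence their sets of optimal $(u,y)$ pairs agree.

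First I would argue that the optimal pair $(u^{\star},y^{\star})$ is in fact \emph{unique}. On the common feasible set, $y$ is an affine function of $u$ through $y=\mathscr{O}_N(A,C)\xini+\mathscr{T}_N(A,B,C,D)u$, so after eliminating $y$ the cost becomes a function of $u$ alone. Because $R\succ 0$, this reduced cost is strictly convex in $u$, while the feasible set is convex (as $\mathcal{U}$, $\mathcal{Y}$ are convex and non-empty and the equality constraint is affine). Strict convexity then forces a unique minimizer $u^{\star}$, and the affine relation yields a unique $y^{\star}$. The semidefiniteness $Q\succeq 0$ is harmless here precisely because uniqueness is inherited from the dependence on $u$. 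This settles the base case: at the first iteration both algorithms compute the same $(u^{\star},y^{\star})$ and apply the same inputs $(u_0^{\star},\dots,u_s^{\star})$.

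For the inductive step I would propagate the equivalence across the memory updates in steps~\ref{step:MPC4} and~\ref{step:DeePCend}. Since both algorithms apply identical inputs to the same deterministic system, they generate identical output measurements, so both maintain the same $\Tini$-length history $\col(\uini,\yini)$. By Lemma~\ref{lem:initialstate}, this history fixes a unique underlying state $\xini$, which is exactly the MPC estimate $\hat{x}(t+s)=\xini$ under full-state or deterministic measurements. The hypotheses of Theorem~\ref{thm:main} therefore hold again at time $t+s$, the feasible sets coincide once more, and the uniqueness argument applies verbatim; induction closes the argument.

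The main obstacle I anticipate is not the feasible-set equality (already supplied by Theorem~\ref{thm:main}) but the uniqueness argument and its interaction with the partial semidefiniteness of the cost: one must reduce to a strictly convex problem in $u$ rather than hope for strict convexity in the full $(u,y)$ variable, where it fails whenever $Q$ is singular. A secondary subtlety is verifying that the state-estimate consistency $\hat{x}=\xini$ is preserved under the receding-horizon update, which is precisely where the deterministic (or full-state) measurement assumption is essential.
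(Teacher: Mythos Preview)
Your proposal is correct and follows essentially the same route as the paper: use Theorem~\ref{thm:main} to equate the feasible sets in the $(u,y)$ variables, invoke strict convexity in $u$ (from $R\succ 0$) to obtain a unique optimal input sequence, and then propagate the equivalence through the receding-horizon updates by repeating the argument. Your treatment is in fact a bit more careful than the paper's in explicitly eliminating $y$ via the affine relation before claiming strict convexity, and in framing the recursion as a formal induction.
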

\begin{proof}
Since $R \succ 0$, the cost function in~\eqref{eq:MPC} is strictly convex in the decision variable $u$. Thus, since the constraints are convex and non-empty, a solution $(u_{\textup{MPC}}^{\star},x_{\textup{MPC}}^{\star},y_{\textup{MPC}}^{\star})$ to~\eqref{eq:MPC} exists, and $u_{\textup{MPC}}^{\star}$ is unique (see, e.g.,~\cite{SB-LV:04-book}). Similarly, the cost function in~\eqref{eq:DeePC} is strictly convex in the decision variable $u$ and the constraints are convex and non-empty. Hence, a solution $(g_{\textup{DeePC}}^{\star},u_{\textup{DeePC}}^{\star},y_{\textup{DeePC}}^{\star})$ to~\eqref{eq:DeePC} exists, and $u_{\textup{DeePC}}^{\star}$ is unique. Since the cost function in~\eqref{eq:MPC} and~\eqref{eq:DeePC} coincide and the feasible sets of~\eqref{eq:MPC} and~\eqref{eq:DeePC} are equal (by Theorem~\ref{thm:main}), then $u_{\textup{MPC}}^{\star}=u_{\textup{DeePC}}^{\star}$. Applying control inputs $(u(t),\dots,u(t+s))=(u_0^{\star},\dots,u_s^{\star})$ for some $s\leq N-1$ to system~\eqref{eq:LTIsystem} yields corresponding output sequence $(y(t),\dots,y(t+s)=(y_0^{\star},\dots,y_s^{\star})$. Updating $\col(\uini,\yini)$ to the most recent input/output measurements and setting the state estimate in Algorithm~\ref{alg:MPC} to $\xini$ as in Theorem~\ref{thm:main} yields equal feasible sets. Repeating the above argument, both algorithms compute an identical optimal control sequence. This argument can be repeated for all iterations of the algorithms proving the result.
\end{proof}
%

One notable feature of the DeePC algorithm presented in Algorithm~\ref{alg:DeePC} is its simplicity when compared to reinforcement learning approaches~\cite{BR:18}, and other related model-based schemes. The DeePC algorithm achieves system ID, state estimation, and trajectory prediction with one linear equation resulting in a quadratic program with $T-\Tini-N+1$ number of decision variables where $T\in\integerspositive$ is the amount of data collected. In order to satisfy the persistency of excitation assumption in Theorem~\ref{thm:main}, one must collect a minimum of $(m+1)(\Tini+N+\bm{n}(\bv))-1$ data samples implying that the number of decision variables in~\eqref{eq:DeePC} is at least $m(\Tini+N)+(m+1)\bm{n}(\bv)$.

Note that the persistency of excitation assumption assumes knowledge of $\bm{n}(\bv)$ and $\bm{\ell}(\bv)$, which are properties that are a priori unknown. We know that $\bm{\ell}(\bv)\leq\bm{n}(\bv)$ by the Cayley-Hamilton theorem. Hence an upper bound for $\bm{n}(\bv)$ is sufficient. 
In practice, one would simply collect a sufficiently large amount of data to exceed the necessary amount for persistency of excitation. If, however, $\bm{n}(\bv)$ is underestimated and an insufficient amount of data is collected, the matrix $\col(\Up,\Yp,\Uf,\Yf)$ will represent a reduced order linear system with an approximate input/output behaviour. The precise implications of this model reduction need to be investigated in future work.
\section{Beyond deterministic LTI systems}\label{sec:beyondLTI}
In this section we provide preliminary results which shows the promising extension of the DeePC algorithm beyond deterministic LTI systems. We offer insightful algorithmic extensions by means of salient regularizations, show their utility through a numerical study, and provide plausible reasoning for the regularizations.

\subsection{Regularized DeePC Algorithm}
Consider now the nonlinear discrete-time system given by
\begin{equation}\label{eq:generalsystem}
\begin{cases}
x(t+1)=f(x(t),u(t)) \\
y(t)=h(x(t),u(t),\eta(t)),
\end{cases}
\end{equation}
where $\eta(t)\in \real^p$ is white measurement noise, and $f\colon \real^n\times\real^m\to\real^n$ and $h\colon \real^n \times \real^m\times \real^p \to \real^p$ are not necessarily linear. One may also consider a system affected by process noise. However, we focus on systems only affected by measurement noise in order to isolate its effect on the DeePC algorithm. To apply the DeePC algorithm to system~\eqref{eq:generalsystem}, we propose three regularizations to the optimal control problem~\eqref{eq:DeePC}. In particular, we introduce the following \emph{regularized} optimization problem:
\begin{align}\label{eq:regularizedDeePC}
\underset{g,u,y,\sigma_y}{\text{minimize}}\quad
& \displaystyle\sum_{k=0}^{N-1}\left(\left\|y_k-r_{t+k}\right\|_Q^2 +\left\|u_{k}\right\|_R^2\right) \nonumber \\
& +\lambda_g\|g\|_1 +\lambda_y \|\sigma_y\|_1 \nonumber \\
\text{subject to}\quad
& \begin{pmatrix}
\Uphat \\ \Yphat \\ \Ufhat \\ \Yfhat
\end{pmatrix}g
=\begin{pmatrix}
\uini \\ \yini \\ u \\ y
\end{pmatrix} + 
\begin{pmatrix}
0 \\ \sigma_y \\ 0 \\ 0
\end{pmatrix}, \\
& u_k\in \mathcal{U}, \; \forall k \in \{0,\dots,N-1\}, \nonumber\\
& y_k\in\mathcal{Y},\; \forall k \in \{0,\dots,N-1\},\nonumber
\end{align}
where $\sigma_y \in \real^{\Tini p}$ is an auxiliary slack variable, $\lambda_y,\lambda_g \in \real_{>0}$ are regularization parameters, and $\col(\Uphat, \Yphat, \Ufhat, \Yfhat)$ is a {low-rank matrix approximation} of $\col(\Up,\Yp,\Uf,\Yf)$. 
Let us explain these three regularizations. We offer convincing numerical evidence in Section~\ref{sec:casestudy}.

\textbf{Slack variable:} When the output measurements are corrupted by noise, the constraint equation in~\eqref{eq:DeePC} may become inconsistent. Hence, in~\eqref{eq:regularizedDeePC} we include the slack variable $\sigma_y$ in the constraint to ensure feasibility of the constraint at all times. We penalize the slack variable with a weighted one-norm penalty function. Choosing $\lambda_y$ sufficiently large gives the desired property that $\sigma_y \neq 0$ only if the constraint is infeasible (see, e.g.,~\cite{RF:13-book}), that is, only if the data is inconsistent.

\textbf{One-norm regularization on $\boldsymbol{g}$:} The cost includes a one-norm penalty on $g$. We conjecture that this regularization is related to distributionally robust optimization problems, in which similar regularization terms arise~\cite{PME-DK:18,SSA-DK-PME:17}.

\textbf{Low-rank approximation:} By performing a low-rank matrix approximation (e.g., via singular value decomposition (SVD) and truncation~\cite{SC:15}), we take into account only the most dominant sub-behaviour (corresponding to the largest singular values), resulting in a data matrix describing the behaviour of the closest deterministic LTI system (where ``closest'' is measured with the Frobenius norm in the SVD case). In the case of noisy measurements, the SVD filters the noise. In the case of nonlinear dynamics (which can be lifted to infinite-dimensional linear dynamics with a nonlinear output map~\cite{SB-LC:85,KK-WHS:91-book}), the SVD results in a matrix describing an approximate LTI model, i.e., the most relevant basis functions of the infinite-dimensional lift whose dimension can be chosen by adjusting the SVD cutoff. Note that after performing the low-rank approximation, the DeePC algorithm does not require that the matrix $\col(\Uphat, \Yphat, \Ufhat, \Yfhat)$ have a Hankel structure. This is in contrast to subspace ID techniques, in which low-rank approximations must be carefully modified in order to preserve the Hankel structure of the data matrix resulting in higher algorithmic and computational complexity~\cite{IM:18-book}.

\subsection{Aerial Robotics Case Study}\label{sec:casestudy}
We illustrate the performance of the regularized DeePC algorithm, i.e., Algorithm~\ref{alg:DeePC} with~\eqref{eq:regularizedDeePC}, by simulating it on a high-fidelity nonlinear quadcopter model~\cite{RM-VK-PC:12}, and compare the performance to system identification (ID) followed by MPC using the identified model. The states of the quadcopter model are given by the 3 spatial coordinates ($x$, $y$, $z$) and their velocities, and  the 3 angular coordinates $(\alpha,\beta,\gamma)$ and their velocities, i.e., the state is $(x,y,z,\dot{x},\dot{y},\dot{z},\alpha,\beta,\gamma,\dot{\alpha},\dot{\beta},\dot{\gamma})$. The inputs are given by the thrusts from the 4 rotors, $(u_1,u_2,u_3,u_4)$. We assume full state measurement to facilitate the comparison to standard MPC. Data was collected from the nonlinear model subject to additive white measurement noise. We collected 214 input/output measurements with a sample time of 0.1 seconds. Drawing the input sequence from a uniformly distributed random variable ensured that the data was persistently exciting. For the model-based MPC, the data was used to identify the system parameters through the least square prediction error method with an assumed state dimension of 12. The following parameters were chosen for the optimization problems~\eqref{eq:MPC} and~\eqref{eq:regularizedDeePC}: $N=30$, $\Tini =1$, $R=I$, $Q=\textup{diag}(200,200,300,1,\dots,1)$, $\lambda_g=30$, $\lambda_y=10^5$, $\col(\Uphat,\Yphat,\Ufhat,\Yfhat)=\col(\Up,\Yp,\Uf,\Yf)$. The thrust from each rotor was constrained between $0$ and $1$, and the $(x,y,z)$ coordinates were constrained between $-3$ and $3$.
 
We simulated the system ID followed by the MPC algorithm and the regularized DeePC algorithm on the nonlinear and stochastic quadcopter model in which the quadcopter was commanded to follow a series of figure-eight trajectories for a duration of 60 seconds. We observe that DeePC performs better than sequential ID and MPC in terms of reference tracking and constraint satisfaction; see Figure~\ref{fig:quadplot} for an illustration.

Another simulation was performed in which the quadcopter was commanded to perform a simple step trajectory in the $(x,y,z)$ coordinates with the same constraints listed above. The duration of constraint violations and the cost were measured. This was repeated 30 times with different data sets for constructing $\col(\Up,\Yp,\Uf,\Yf)$, and different random seeds for the measurement noise. The results are displayed in Figure~\ref{fig:hist} and show that DeePC consistently outperforms identification-based MPC in terms of cost and constraint satisfaction. While these observations should be made cautiously, an intuitive explanation for the superior performance of DeePC is that~\eqref{eq:regularizedDeePC} simultaneously optimizes for the best system model, state estimation, and control policy, whereas conventional MPC requires fixing a system model and performs these tasks independently.

To study the effect of the regularizations on the performance of the DeePC algorithm, we performed a sensitivity analysis on the regularization parameters $\lambda_y$ and $\lambda_g$. We did not perform any low-rank approximation to the data. The quadcopter was commanded to follow the same step trajectory as in the previous simulation. The duration of constraint violations and cost were measured. This was repeated 8 times with different data sets, and the duration of constraint violations and cost were averaged over these 8 data sets. The results in Figure~\ref{fig:regularizations} show that the regularizations improve performance.

Our preliminary simulations suggest that one-norm regularization of $\lambda_g$ is more effective and robust than a low-rank approximation of the Hankel matrix $\col(\Up,\Yp,\Uf,\Yf)$. In fact, the latter appears to be sensitive and needs to be performed on a case by case basis to avoid unstable behaviour. This will be investigated in future work.
\begin{figure}[htb!]
 \begin{tabular}{cc}
 \multirow{2}{*}[10mm]{\begin{subfigure}{0.4\columnwidth}
    \centering  \includegraphics[width=\textwidth, trim={0.1pt 0 0 0}, clip=true]{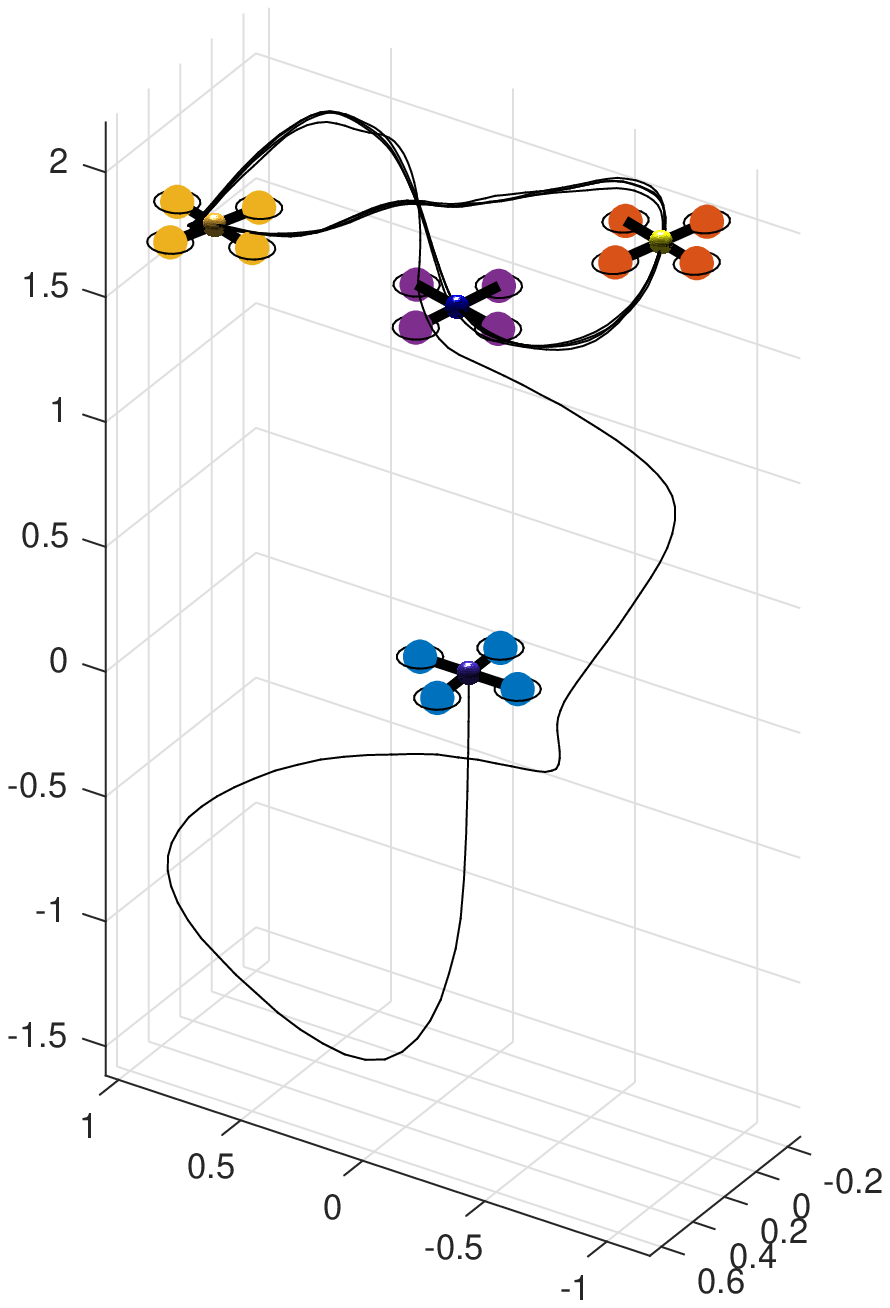}
      \caption{}
      \label{fig:figure8}
    \end{subfigure}}
 &\begin{subfigure}{0.56\columnwidth}
        \includegraphics[width=\textwidth]{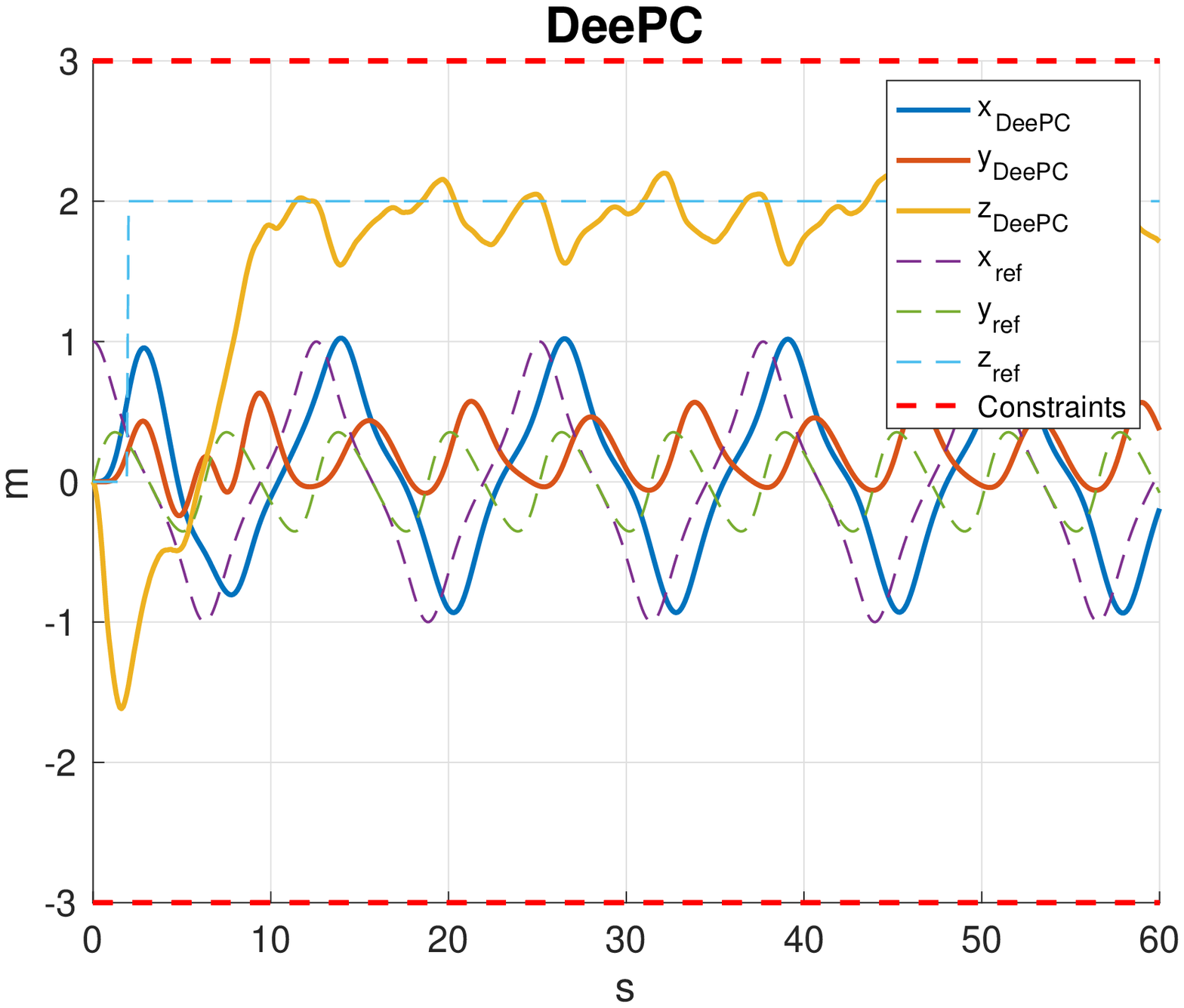}
        \caption{}
        \label{fig:deepc_xyz}
      \end{subfigure}  
\\
&\begin{subfigure}{0.56\columnwidth}
        \includegraphics[width=\textwidth]{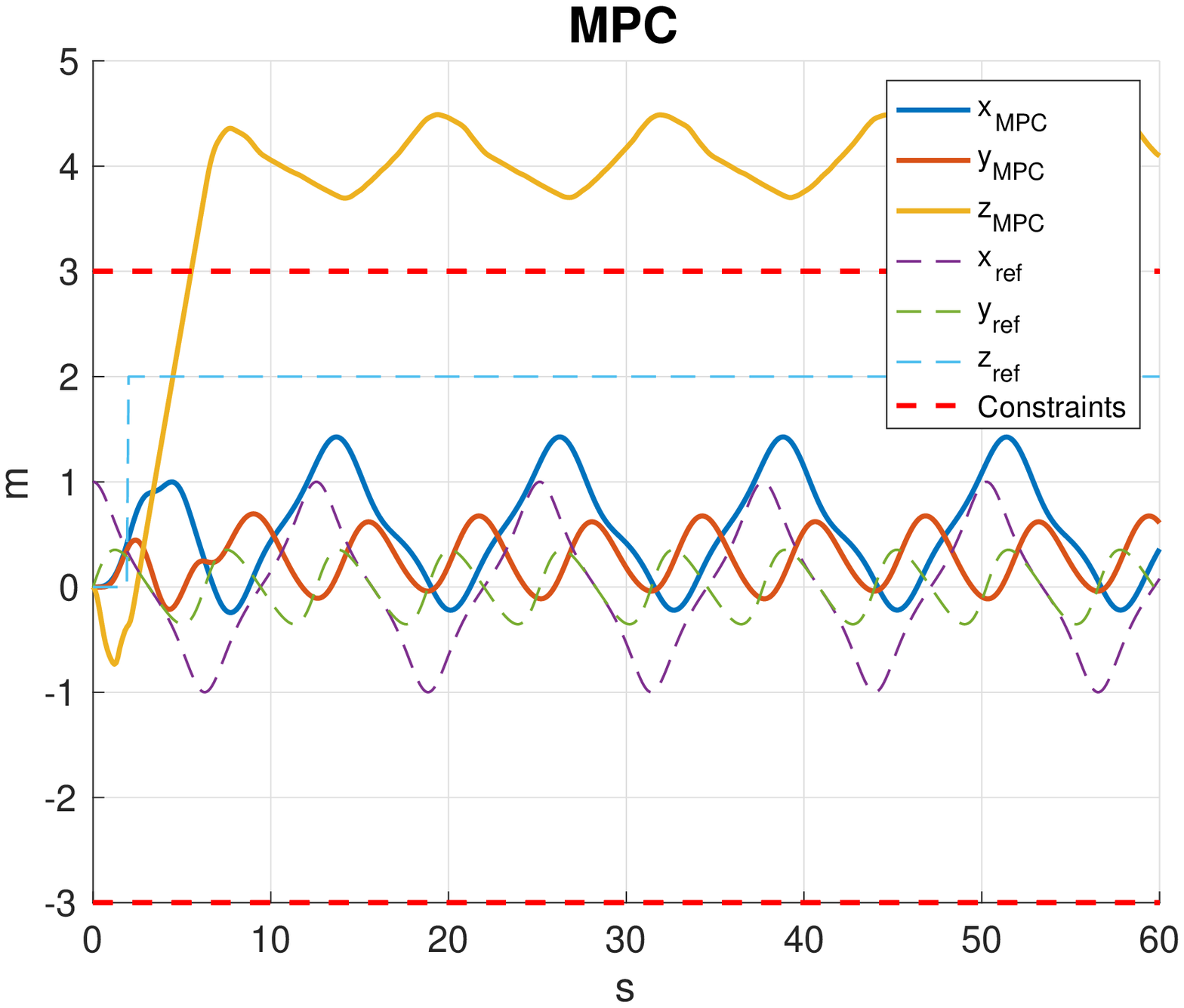}
        \caption{}
        \label{fig:mpc_xyz}
      \end{subfigure}
      
 \end{tabular}
\caption{Figure~\subref*{fig:figure8}: three dimensional plot of the trajectory of the quadcopter at different instances of time controlled with DeePC. Figure~\subref*{fig:deepc_xyz} and~\subref*{fig:mpc_xyz}: trajectories of the spatial coordinates when controlled by DeePC and MPC, respectively. The horizontal red dashed lines represent constraints.}
\label{fig:quadplot}
\end{figure}
\begin{figure}[h!]
	\centering
		\begin{minipage}[h]{0.9\linewidth}
			\includegraphics[width=\linewidth]{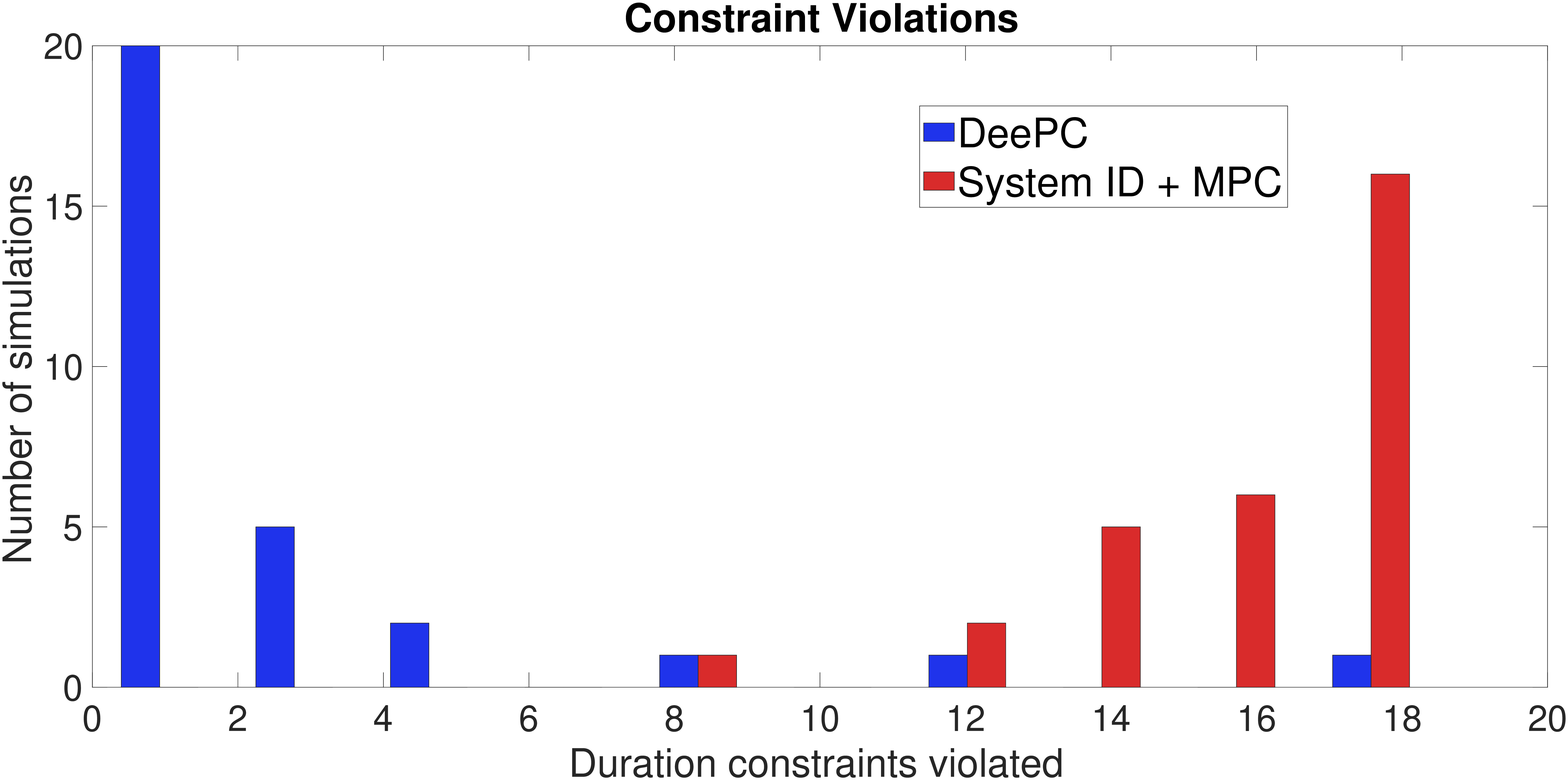} 
		\end{minipage}
		
		\begin{minipage}[h]{\linewidth}\vspace{3mm}
		\end{minipage}
		
		\begin{minipage}[h]{0.9\linewidth}
			\includegraphics[width=\linewidth]{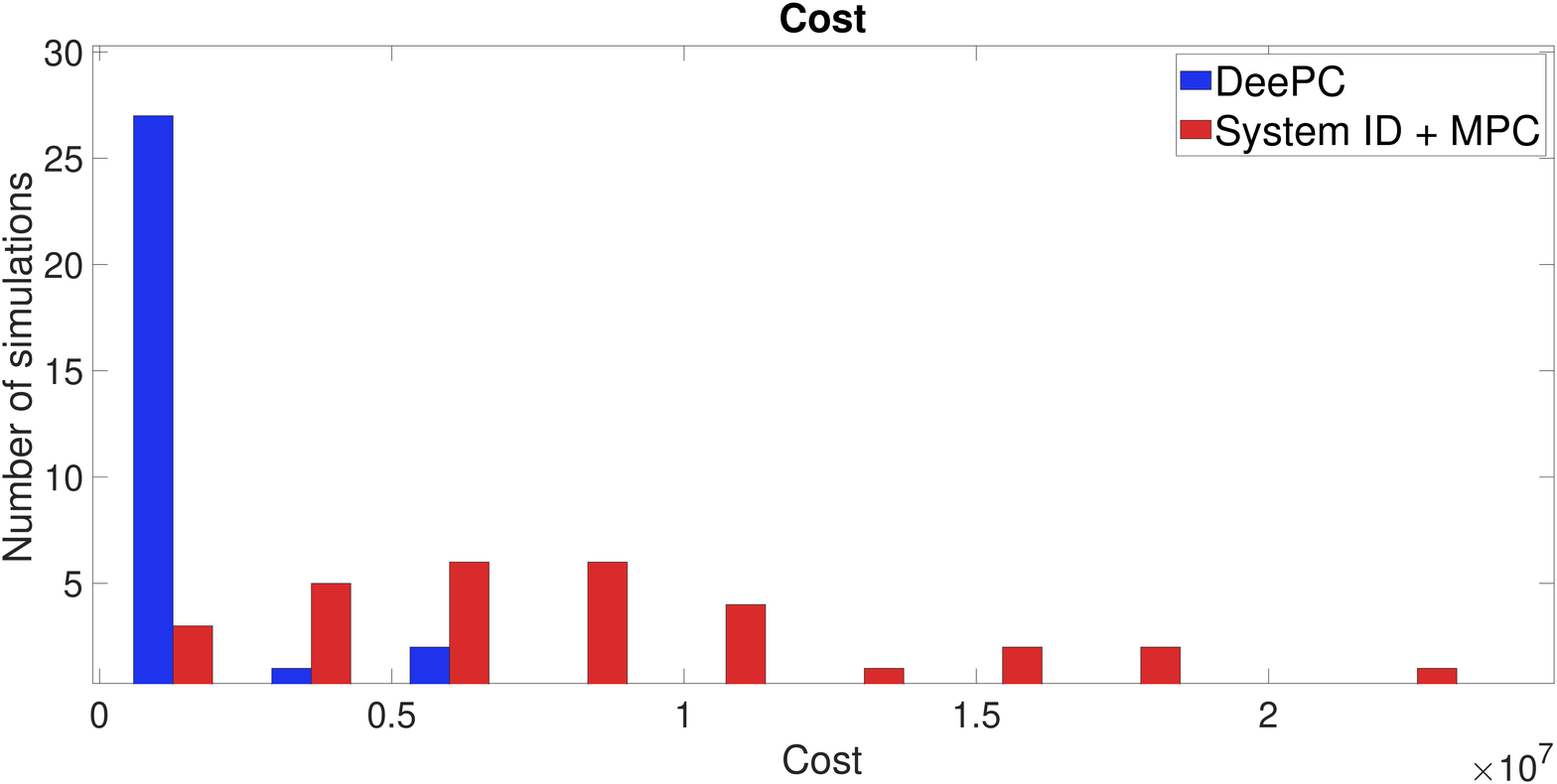} 
		\end{minipage}
	\caption{Constraint violations and cost comparison of DeePC and system ID with MPC.}
	\label{fig:hist}
\end{figure}

\begin{figure}[htb!]
	\centering
	\begin{minipage}[h]{0.48\linewidth}
		\includegraphics[width=\linewidth]{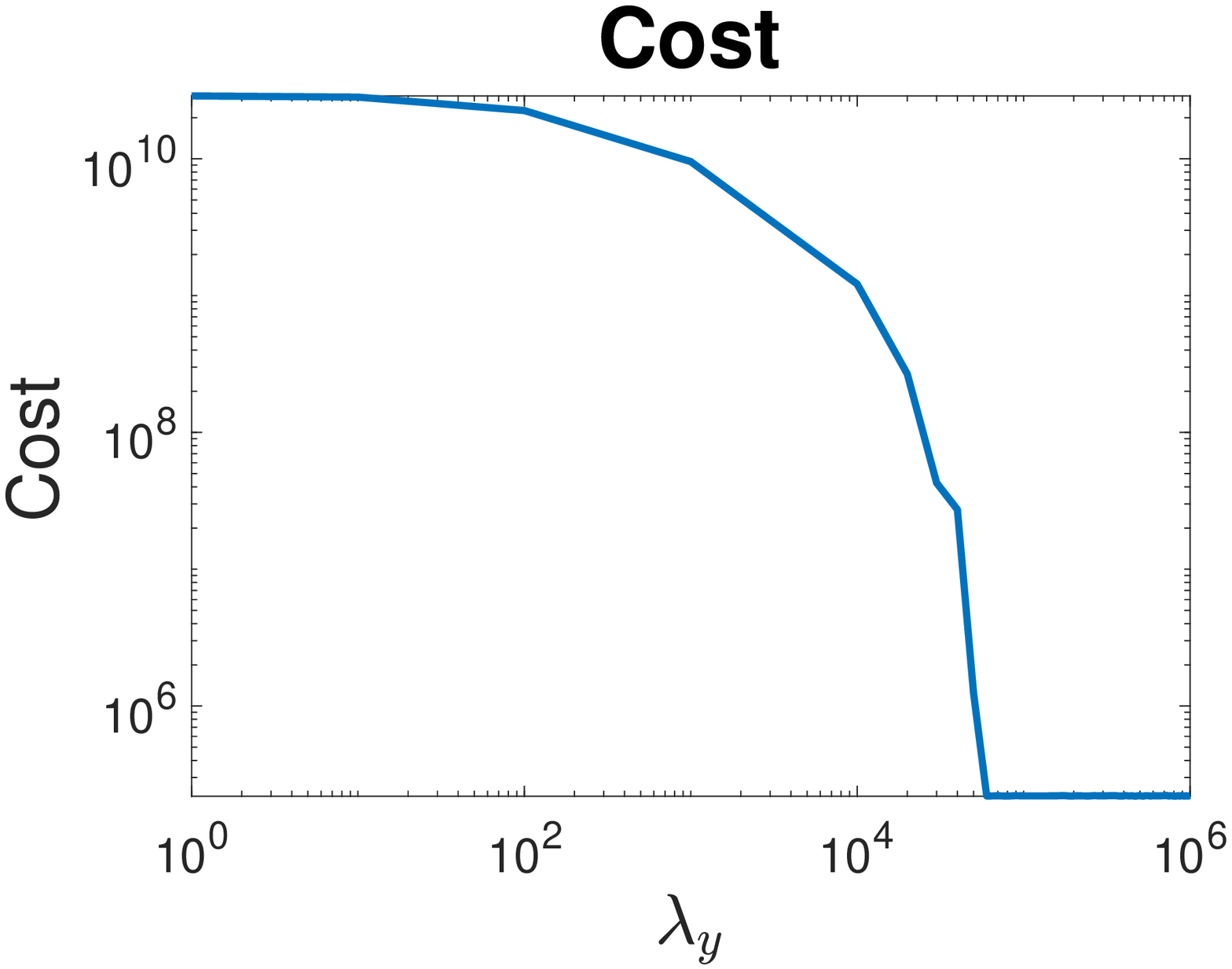}
		\subcaption{Fixed $\lambda_g = 300$}
		\label{fig:lambda_y_cost}
	\end{minipage}
	~
	\begin{minipage}[h]{0.48\linewidth}
		\includegraphics[width=\linewidth]{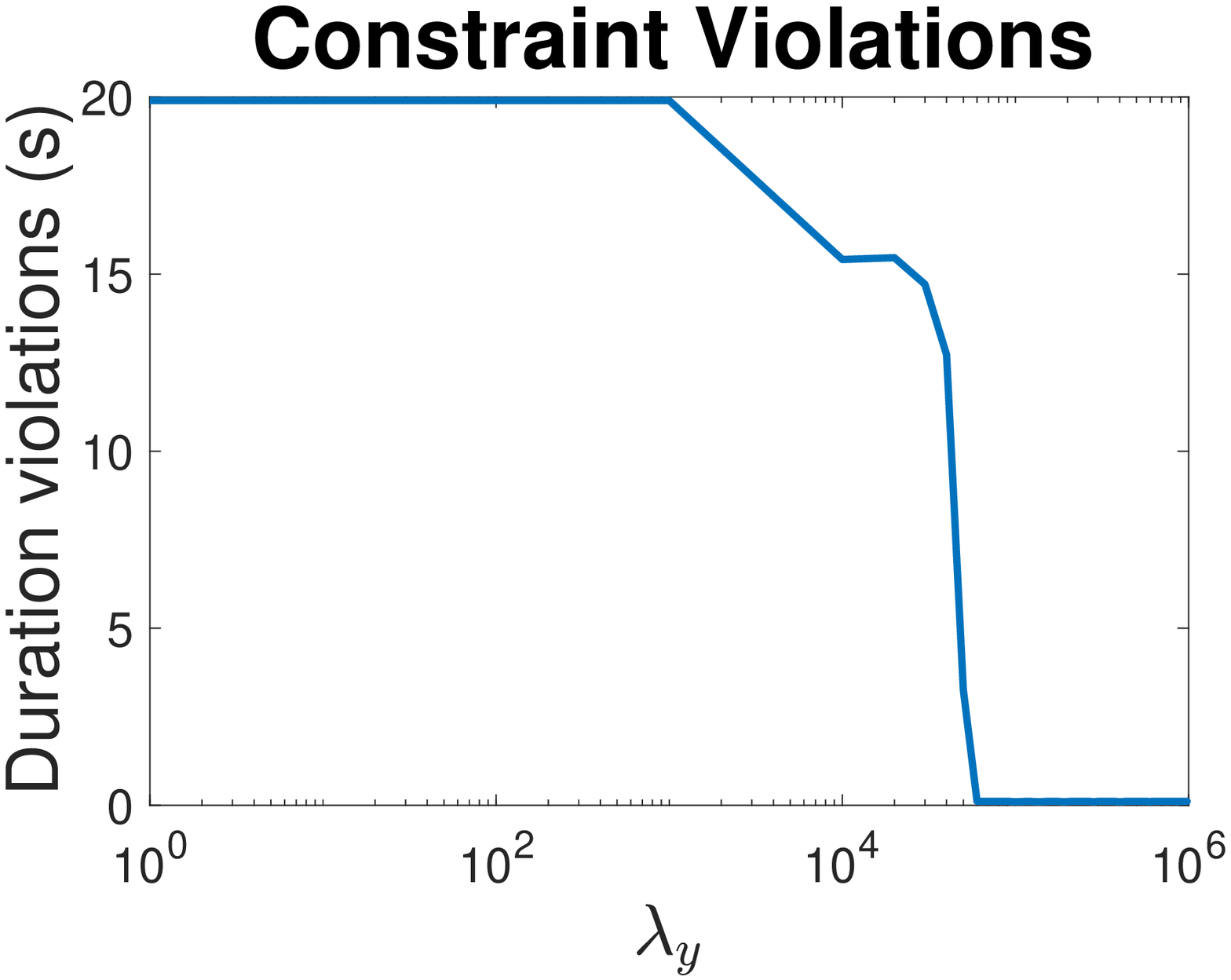}
		\subcaption{Fixed $\lambda_g = 300$}
		\label{fig:lambda_y_constraints}
	\end{minipage}
	
\begin{minipage}[h]{0.48\linewidth}\vspace{2mm}
\end{minipage}
~
\begin{minipage}[h]{0.48\linewidth}\vspace{2mm}
\end{minipage}

		\begin{minipage}[h]{0.48\linewidth}
		\includegraphics[width=\linewidth]{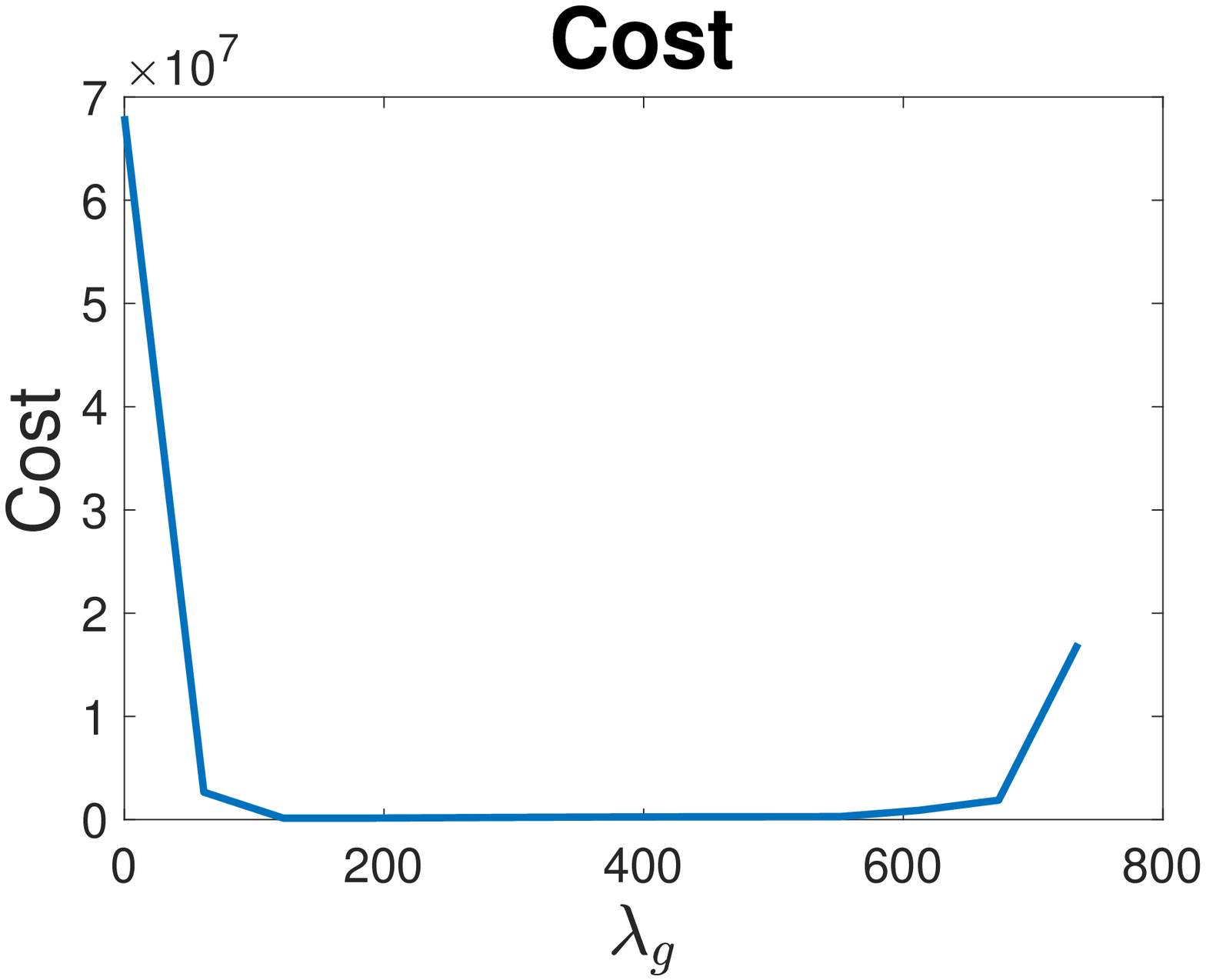}
		\subcaption{Fixed $\lambda_y = 10^5$}
		\label{fig:lambda_g_cost}
	\end{minipage}
	~
	\begin{minipage}[h]{0.48\linewidth}
		\includegraphics[width=\linewidth]{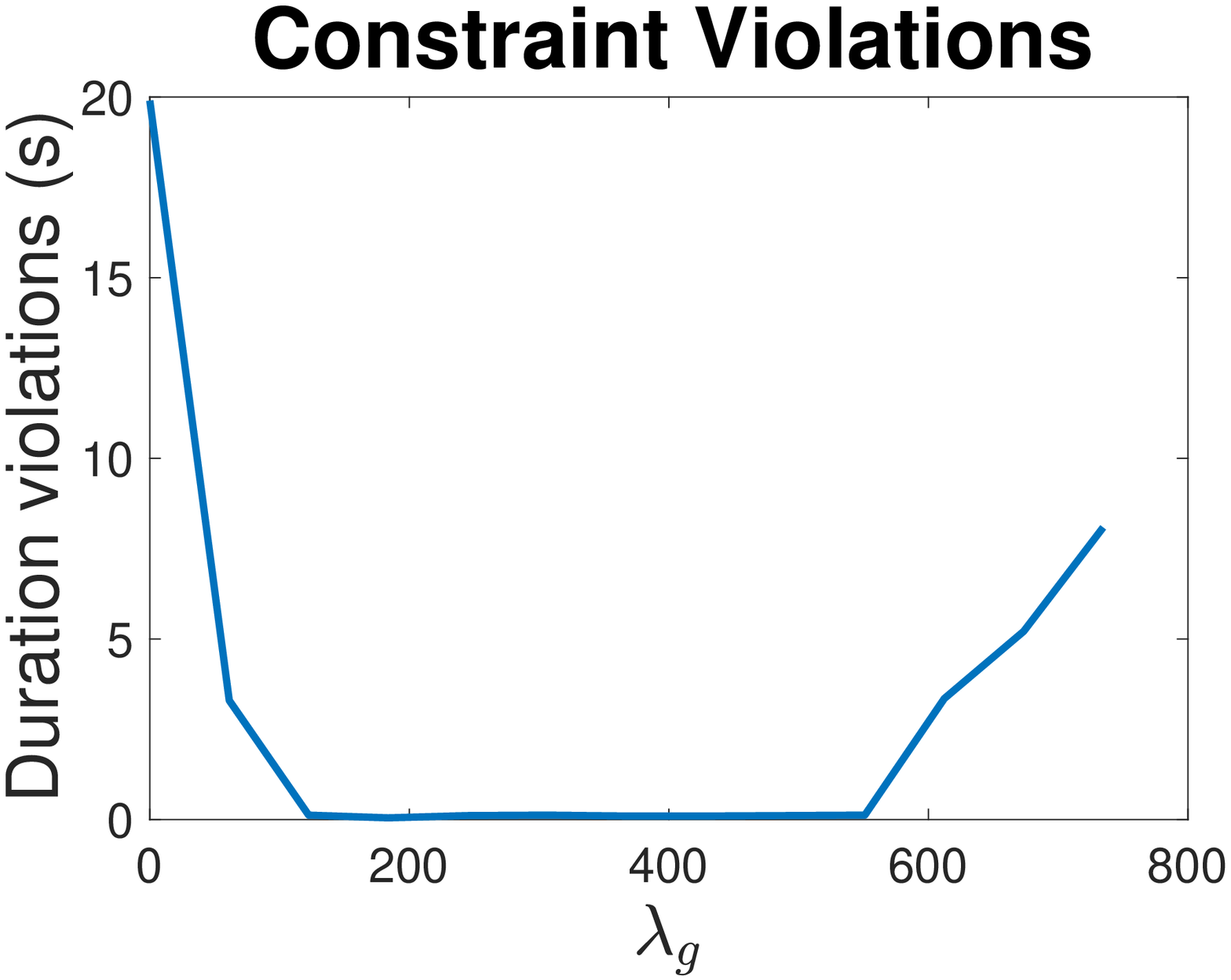}
		\subcaption{Fixed $\lambda_y = 10^5$}
		\label{fig:lambda_g_constraints}
	\end{minipage}
	\caption{Performance of quadcopter controlled by the DeePC algorithm with different regularization parameters.}
	\label{fig:regularizations}
\end{figure}

\section{Conclusion}\label{sec:conclusion}
We presented a data-enabled algorithm that can be applied to unknown LTI systems and formally showed its equivalence to the classical MPC algorithm. The DeePC algorithm uses a finite data set to learn the behaviour of the unknown system and computes optimal controls using real-time feedback to drive the system along a desired trajectory while respecting system constraints. Furthermore, we simulated a regularized version of the algorithm on stochastic nonlinear quadcopter dynamics illustrating its capabilities beyond deterministic LTI systems. The performance was superior when compared to system ID followed by MPC. Ongoing and future work focuses on the robustness of the DeePC algorithm and its regularization when applied to stochastic and nonlinear dynamics.
\section*{Acknowledgements}
We thank Bassam Bamieh, Ben Recht, Ashish Cherukuri, and Manfred Morari for useful discussions, and Melanie Zeilinger and Colin Jones for providing the simulation model.

\bibliographystyle{IEEEtran}%
\bibliography{IEEEabrv,JC}

\begin{thebibliography}{10}
\providecommand{\url}[1]{#1}
\csname url@rmstyle\endcsname
\providecommand{\newblock}{\relax}
\providecommand{\bibinfo}[2]{#2}
\providecommand\BIBentrySTDinterwordspacing{\spaceskip=0pt\relax}
\providecommand\BIBentryALTinterwordstretchfactor{4}
\providecommand\BIBentryALTinterwordspacing{\spaceskip=\fontdimen2\font plus
\BIBentryALTinterwordstretchfactor\fontdimen3\font minus
  \fontdimen4\font\relax}
\providecommand\BIBforeignlanguage[2]{{%
\expandafter\ifx\csname l@#1\endcsname\relax
\typeout{** WARNING: IEEEtran.bst: No hyphenation pattern has been}%
\typeout{** loaded for the language `#1'. Using the pattern for}%
\typeout{** the default language instead.}%
\else
\language=\csname l@#1\endcsname
\fi
#2}}

\bibitem{future-of-the-field:17}
F.~Lamnabhi-Lagarrigue, A.~Annaswamy, S.~Engell, A.~Isaksson, P.~Khargonekar,
  R.~M. Murray, H.~Nijmeijer, T.~Samad, D.~Tilbury, and P.~Van~den Hof,
  ``Systems \& control for the future of humanity, research agenda: {C}urrent
  and future roles, impact and grand challenges,'' \emph{Annual Reviews in
  Control}, vol.~43, pp. 1--64, 2017.

\bibitem{ZSH-ZW:13}
Z.~S. Hou and Z.~Wang, ``From model-based control to data-driven control:
  {S}urvey, classification and perspective,'' \emph{Information Sciences}, vol.
  235, pp. 3--35, 2013.

\bibitem{KJA-TH:95-book}
K.~J. {\AA}str{\"o}m and T.~H{\"a}gglund, \emph{PID Controllers: Theory,
  Design, and Tuning}.\hskip 1em plus 0.5em minus 0.4em\relax Instrument
  society of America, 1995, vol.~2.

\bibitem{JBR-DQM:09-book}
J.~B. Rawlings and D.~Q. Mayne, \emph{Model Predictive Control: Theory and
  Design}.\hskip 1em plus 0.5em minus 0.4em\relax Nob Hill Publishing, 2009.

\bibitem{EFC-CBA:13}
E.~F. Camacho and C.~B. Alba, \emph{Model Predictive Control}.\hskip 1em plus
  0.5em minus 0.4em\relax Springer Science \& Business Media, 2013.

\bibitem{AB-MM:99}
A.~Bemporad and M.~Morari, ``Robust model predictive control: {A} survey,'' in
  \emph{Robustness in identification and control}.\hskip 1em plus 0.5em minus
  0.4em\relax Springer, 1999, pp. 207--226.

\bibitem{DQM:14}
D.~Q. Mayne, ``Model predictive control: {R}ecent developments and future
  promise,'' \emph{Automatica}, vol.~50, no.~12, pp. 2967--2986, 2014.

\bibitem{FB-AB-MM:17-book}
F.~Borrelli, A.~Bemporad, and M.~Morari, \emph{Predictive Control for Linear
  and Hybrid Systems}.\hskip 1em plus 0.5em minus 0.4em\relax Cambridge
  University Press, 2017.

\bibitem{MB-JF-SE-JCG:17}
M.~Brown, J.~Funke, S.~Erlien, and J.~C. Gerdes, ``Safe driving envelopes for
  path tracking in autonomous vehicles,'' \emph{Control Engineering Practice},
  vol.~61, pp. 307--316, 2017.

\bibitem{IP-SO-RB-JBS-CS-SIN:13}
I.~Prodan, S.~Olaru, R.~Bencatel, J.~B. de~Sousa, C.~Stoica, and S.-I.
  Niculescu, ``Receding horizon flight control for trajectory tracking of
  autonomous aerial vehicles,'' \emph{Control Engineering Practice}, vol.~21,
  no.~10, pp. 1334--1349, 2013.

\bibitem{FB-EF-MP-KS-SLS:11}
F.~Bullo, E.~Frazzoli, M.~Pavone, K.~Savla, and S.~L. Smith, ``Dynamic vehicle
  routing for robotic systems,'' \emph{Proceedings of the IEEE}, vol.~99,
  no.~9, pp. 1482--1504, 2011.

\bibitem{GD-AG-RSS-JL:17}
G.~Darivianakis, A.~Georghiou, R.~S. Smith, and J.~Lygeros, ``The power of
  diversity: Data-driven robust predictive control for energy-efficient
  buildings and districts,'' \emph{IEEE Transactions on Control Systems
  Technology}, 2017.

\bibitem{HH:05}
H.~Hjalmarsson, ``From experiment design to closed-loop control,''
  \emph{Automatica}, vol.~41, no.~3, pp. 393--438, 2005.

\bibitem{BAO:96}
B.~A. Ogunnaike, ``A contemporary industrial perspective on process control
  theory and practice,'' \emph{Annual Reviews in Control}, vol.~20, pp. 1--8,
  1996.

\bibitem{JR:93}
J.~Richalet, ``Industrial applications of model based predictive control,''
  \emph{Automatica}, vol.~29, no.~5, pp. 1251--1274, 1993.

\bibitem{FL-DV-KGV:12}
F.~L. Lewis, D.~Vrabie, and K.~G. Vamvoudakis, ``Reinforcement learning and
  feedback control: {U}sing natural decision methods to design optimal adaptive
  controllers,'' \emph{IEEE Control Systems}, vol.~32, no.~6, pp. 76--105,
  2012.

\bibitem{YO-MG-RJ:17}
Y.~Ouyang, M.~Gagrani, and R.~Jain, ``Learning-based control of unknown linear
  systems with {T}hompson sampling,'' \emph{arXiv preprint arXiv:1709.04047},
  2017.

\bibitem{BK-FLL-HM-AK-MN:14}
B.~Kiumarsi, F.~L. Lewis, H.~Modares, A.~Karimpour, and M.-B. Naghibi-Sistani,
  ``Reinforcement {Q}-learning for optimal tracking control of linear
  discrete-time systems with unknown dynamics,'' \emph{Automatica}, vol.~50,
  no.~4, pp. 1167--1175, 2014.

\bibitem{AMD-SM:17}
A.~M. Devraj and S.~Meyn, ``Zap {Q}-learning,'' in \emph{Advances in Neural
  Information Processing Systems}, 2017, pp. 2235--2244.

\bibitem{BR:18}
B.~Recht, ``A tour of reinforcement learning: {T}he view from continuous
  control,'' \emph{arXiv preprint arXiv:1806.09460}, 2018.

\bibitem{RI-HP-MG-DP:17}
R.~Islam, P.~Henderson, M.~Gomrokchi, and D.~Precup, ``Reproducibility of
  benchmarked deep reinforcement learning tasks for continuous control,''
  \emph{arXiv preprint arXiv:1708.04133}, 2017.

\bibitem{MCC-EW:02}
M.~C. Campi and E.~Weyer, ``Finite sample properties of system identification
  methods,'' \emph{IEEE Transactions on Automatic Control}, vol.~47, no.~8, pp.
  1329--1334, 2002.

\bibitem{MV-RLK:08}
M.~Vidyasagar and R.~L. Karandikar, ``A learning theory approach to system
  identification and stochastic adaptive control,'' in \emph{Probabilistic and
  randomized methods for design under uncertainty}.\hskip 1em plus 0.5em minus
  0.4em\relax Springer, 2006, pp. 265--302.

\bibitem{ST-RB-AP-BR:17}
S.~Tu, R.~Boczar, A.~Packard, and B.~Recht, ``Non-asymptotic analysis of robust
  control from coarse-grained identification,'' \emph{arXiv preprint
  arXiv:1707.04791}, 2017.

\bibitem{RB-NM-BR:18}
R.~Boczar, N.~Matni, and B.~Recht, ``Finite-data performance guarantees for the
  output-feedback control of an unknown system,'' \emph{arXiv preprint
  arXiv:1803.09186}, 2018.

\bibitem{SD-HM-NM-BR-ST:18}
S.~Dean, H.~Mania, N.~Matni, B.~Recht, and S.~Tu, ``On the sample complexity of
  the linear quadratic regulator,'' \emph{arXiv preprint arXiv:1710.01688},
  2017.

\bibitem{CER:04}
C.~E. Rasmussen, ``Gaussian processes in machine learning,'' in \emph{Advanced
  lectures on machine learning}.\hskip 1em plus 0.5em minus 0.4em\relax
  Springer, 2004, pp. 63--71.

\bibitem{FB-MT-AS-AK:17}
F.~Berkenkamp, M.~Turchetta, A.~Schoellig, and A.~Krause, ``Safe model-based
  reinforcement learning with stability guarantees,'' in \emph{Advances in
  Neural Information Processing Systems}, 2017, pp. 908--918.

\bibitem{FJF-AAK-MNZ-SK-JG-CJT:17}
J.~F. Fisac, A.~K. Akametalu, M.~N. Zeilinger, S.~Kaynama, J.~Gillula, and
  C.~J. Tomlin, ``A general safety framework for learning-based control in
  uncertain robotic systems,'' \emph{IEEE Transactions on Automatic Control},
  2018.

\bibitem{RWF-AB:85}
R.~W. Freedman and A.~Bhatia, ``Adaptive dynamic matrix control: {O}nline
  evaluation of the {DMC} model coefficients,'' in \emph{American Control
  Conference}.\hskip 1em plus 0.5em minus 0.4em\relax IEEE, 1985, pp. 220--225.

\bibitem{CEG-DMP-MM:89}
C.~E. Garcia, D.~M. Prett, and M.~Morari, ``Model predictive control: {T}heory
  and practice -- {A} survey,'' \emph{Automatica}, vol.~25, no.~3, pp.
  335--348, 1989.

\bibitem{PL-JHL-MM-SS:95}
P.~Lundstr{\"o}m, J.~H. Lee, M.~Morari, and S.~Skogestad, ``Limitations of
  dynamic matrix control,'' \emph{Computers \& Chemical Engineering}, vol.~19,
  no.~4, pp. 409--421, 1995.

\bibitem{IM-PR:08}
I.~Markovsky and P.~Rapisarda, ``Data-driven simulation and control,''
  \emph{International Journal of Control}, vol.~81, no.~12, pp. 1946--1959,
  2008.

\bibitem{WF-BDM-PVO-MG:99}
W.~Favoreel, B.~De~Moor, P.~Van~Overschee, and M.~Gevers, ``Model-free
  subspace-based {LQG}-design,'' in \emph{American Control Conference},
  vol.~5.\hskip 1em plus 0.5em minus 0.4em\relax IEEE, 1999, pp. 3372--3376.

\bibitem{AA-MJA-DS-DS:18}
A.~Agarwal, M.~J. Amjad, D.~Shah, and D.~Shen, ``Time series analysis via
  matrix estimation,'' \emph{arXiv preprint arXiv:1802.09064}, 2018.

\bibitem{JCW:86}
J.~C. Willems, ``From time series to linear system -- {P}art {I}. {F}inite
  dimensional linear time invariant systems.'' \emph{Automatica}, vol.~22,
  no.~5, pp. 561--580, 1986.

\bibitem{JCW-PR-IM-BDM:05}
J.~C. Willems, P.~Rapisarda, I.~Markovsky, and B.~L. De~Moor, ``A note on
  persistency of excitation,'' \emph{Systems \& Control Letters}, vol.~54,
  no.~4, pp. 325--329, 2005.

\bibitem{IM-JCW-SVH-BDM:06-book}
I.~Markovsky, J.~C. Willems, S.~Van~Huffel, and B.~De~Moor, \emph{Exact and
  Approximate Modeling of Linear Systems: A Behavioral Approach}.\hskip 1em
  plus 0.5em minus 0.4em\relax SIAM, 2006.

\bibitem{SC:15}
S.~Chatterjee, ``Matrix estimation by universal singular value thresholding,''
  \emph{The Annals of Statistics}, vol.~43, no.~1, pp. 177--214, 2015.

\bibitem{RF:13-book}
R.~Fletcher, \emph{Practical Methods of Optimization}.\hskip 1em plus 0.5em
  minus 0.4em\relax John Wiley \& Sons, 2013.

\bibitem{AF-DL-IA-TA-EFC:09}
A.~Ferramosca, D.~Lim{\'o}n, I.~Alvarado, T.~Alamo, and E.~F. Camacho, ``{MPC}
  for tracking with optimal closed-loop performance,'' \emph{Automatica},
  vol.~45, no.~8, pp. 1975 -- 1978, 2009.

\bibitem{DAC-JPH:17}
D.~A. Copp and J.~P. Hespanha, ``Simultaneous nonlinear model predictive
  control and state estimation,'' \emph{Automatica}, vol.~77, pp. 143--154,
  2017.

\bibitem{LG-JP-MS-KW:10}
L.~Gr{\"u}ne, J.~Pannek, M.~Seehafer, and K.~Worthmann, ``Analysis of
  unconstrained nonlinear {MPC} schemes with time varying control horizon,''
  \emph{SIAM Journal on Control and Optimization}, vol.~48, no.~8, pp.
  4938--4962, 2010.

\bibitem{LL:86-book}
L.~Ljung, \emph{System Identification: Theory for the User}.\hskip 1em plus
  0.5em minus 0.4em\relax Prentice-hall, 1987.

\bibitem{SB-LV:04-book}
S.~Boyd and L.~Vandenberghe, \emph{Convex Optimization}.\hskip 1em plus 0.5em
  minus 0.4em\relax Cambridge university press, 2004.

\bibitem{PME-DK:18}
P.~M. Esfahani and D.~Kuhn, ``Data-driven distributionally robust optimization
  using the wasserstein metric: Performance guarantees and tractable
  reformulations,'' \emph{Mathematical Programming}, vol. 171, no. 1-2, pp.
  115--166, 2018.

\bibitem{SSA-DK-PME:17}
S.~Shafieezadeh-Abadeh, D.~Kuhn, and P.~M. Esfahani, ``Regularization via mass
  transportation,'' \emph{arXiv preprint arXiv:1710.10016}, 2017.

\bibitem{SB-LC:85}
S.~Boyd and L.~Chua, ``Fading memory and the problem of approximating nonlinear
  operators with {V}olterra series,'' \emph{IEEE Transactions on circuits and
  systems}, vol.~32, no.~11, pp. 1150--1161, 1985.

\bibitem{KK-WHS:91-book}
K.~Kowalski and W.-H. Steeb, \emph{Nonlinear Dynamical Systems and {C}arleman
  Linearization}.\hskip 1em plus 0.5em minus 0.4em\relax World Scientific,
  1991.

\bibitem{IM:18-book}
I.~Markovsky, \emph{Low-Rank Approximation: Algorithms, Implementation,
  Applications}, 2nd~ed.\hskip 1em plus 0.5em minus 0.4em\relax Springer, 2018.

\bibitem{RM-VK-PC:12}
R.~Mahony, V.~Kumar, and P.~Corke, ``Multirotor aerial vehicles,'' \emph{IEEE
  Robotics and Automation magazine}, vol.~20, no.~32, 2012.

\end{thebibliography}
\end{document}